\newcommand{\NC}{\mathsf{NC}}
\newcommand{\LogCFL}{\mathsf{LogCFL}}
\newcommand{\shlex}{\mathrm{shlex}}
\newcommand{\val}{\mathrm{val}}
\newcommand{\G}{\mathcal{G}}
\newcommand{\Z}{\mathbb{Z}}
\newcommand{\N}{\mathbb{N}}
\newcommand{\inv}{^{-1}}
\begin{document}

\title{Complexity of word problems for HNN-extensions} 

\author{Markus Lohrey\inst{1}}
\authorrunning{M. Lohrey}
\institute{Universit{\"a}t Siegen, Germany \\
\email{\{lohrey\}@eti.uni-siegen.de}}

\maketitle       

	\begin{abstract}
	The computational complexity of the word problem in HNN-extension of groups is studied. 
	HNN-extension is a fundamental construction in combinatorial group theory.
	It is shown that the word problem for an ascending HNN-extension of a group $H$ is logspace
	reducible to the so-called compressed word problem for $H$. The main result of the paper
	states that the word problem for an HNN-extension of a hyperbolic group $H$ with cyclic
	associated subgroups can be solved in polynomial time. This result can be easily
	extended to fundamental groups of graphs of groups with hyperbolic vertex groups 
	and cyclic edge groups.
	\end{abstract}
	
	\keywords{word problems \and HNN-extensions \and hyperbolic groups}
  \section{Introduction}
  
The study of computational problems in group theory goes back more than 100 years. 
In a seminal paper from 1911, Dehn posed three decision problems \cite{dehn11}: The {\em word problem},
the {\em conjugacy problem}, and the {\em isomorphism problem}. In this paper, we mainly deal with the word 
problem: It is defined for a finitely generated group $G$. This means that there
exists a finite subset $\Sigma \subseteq G$ such that every element of $G$ can be written as a finite product
of elements from $\Sigma$. This allows to represent elements of $G$ by finite words over the alphabet $\Sigma$.
For the word problem, the input consists of such a finite word $w \in \Sigma^*$ and the goal is to check whether $w$ represents the identity element of  $G$. 

In general the word problem is undecidable. By a classical result of Boone \cite{Boo59} and Novikov \cite{Nov58}, there exist finitely presented groups 
(finitely generated groups that can be defined by finitely many equations) with an undecidable word problem;
see \cite{Sti95} for an excellent exposition. On the positive side, there are many classes of groups with decidable
word problems. In his paper from 1912 \cite{Dehn12}, Dehn presented an algorithm that solves the word problem for 
fundamental groups of orientable closed 2-dimensional manifolds. This result was extended to one-relator 
groups (finitely generated groups that can be defined by a single equation)
by Dehn's student Magnus \cite{mag32}. Other important classes of groups with a decidable word problem are:
\begin{itemize}
\item automatic groups \cite{EpsCHLPT92} (including important classes like braid groups \cite{Artin25}, Coxeter groups \cite{bjofra05},
right-angled Artin groups \cite{Cha07}, hyperbolic groups \cite{Gro87}), 
\item finitely generated linear groups, i.e., finitely generated groups that can be faithfully represented by matrices over a field \cite{Rab60}
(including polycyclic groups and nilpotent groups), and
\item  finitely generated metabelian groups (they can be embedded in direct products
of linear groups \cite{Wehr80}).
\end{itemize}
With the rise of computational complexity theory in the 1960's, also the computational complexity of group theoretic
problems moved into the focus of research. From the very beginning, this field attracted researchers from mathematics
as well as computer science. It turned out that for many interesting classes of groups the word problem admits quite 
efficient algorithms. For instance, Lipton and Zalcstein \cite{LiZa77} (for fields of characteristic zero) and 
Simon \cite{Sim79} (for prime characteristic) proved in 1977 (resp., 1979) that deterministic logarithmic space (and hence polynomial time) suffices
to solve the word problem for a linear group. For automatic groups, 
the word problem can be solved in quadratic time \cite{EpsCHLPT92}, and for the subclass
of hyperbolic groups the word problem can be solved in linear time (even real time) \cite{Hol00} and belongs to 
the complexity class $\LogCFL$ \cite{Lo05ijfcs} (the closure of the context-free languages under logspace reductions). 
For one-relator groups, only a non-elementary algorithm is known for the word problem. For the so-called Baumslag group
(a particular one-relator group, whose word problem was believed to be computationally difficult) a polynomial time algorithm
for the word problem was found in  \cite{MyUsWo11}. Recently, the complexity has been further improved to $\mathsf{NC}$
 \cite{MaWe21}.

The complexity of the word problem is also preserved by several important group theoretic constructions, e.g.
graph products (which generalize free products and direct products) \cite{DiekertK16} and wreath products \cite{Waa90}.
Two other important constructions in group theory are 
{\em HNN-extensions} and {\em amalgamated free products}. A theorem of Seifert and van Kampen links these constructions to
algebraic topology. Moreover, HNN-extensions are used in all modern proofs for the undecidability of the word problem
in finitely presented groups. For a base group $H$ with two isomorphic subgroups 
$A$ and $B$ and an isomorphism $\varphi \colon A \to B$, the corresponding
HNN-extension is the group
\begin{equation} \label{HNN}
G=\langle H,t \mid  t^{-1} a t = \varphi(a) \, (a \in A) \rangle.
\end{equation}
Intuitively, it is obtained by adjoing to $H$ a new generator $t$ (the {\em stable letter}) 
in such a way that conjugation of $A$ by $t$ realizes 
$\varphi$. The subgroups $A$ and $B$ are also 
called the {\em associated subgroups}.
If $H$ has a decidable word problem, $A$ and $B$ are finitely generated subgroups of $H$,
and the subgroup membership problems for $A$ and $B$ are decidable, then also the word problem for 
$G$ in \eqref{HNN} is decidable via {\em Britton reduction} \cite{Britt63} (iterated application of rewriting steps 
$t^{-1} a t \to \varphi(a)$ and $t b t^{-1} = \varphi^{-1}(b)$ for $a \in A$ and $b \in B$).
For the special case where $A=B$ and $\varphi$ is the identity, it is shown in  \cite{Waa90} that the word
problem for the HNN-extension $G$ in \eqref{HNN} is $\NC^1$-reducible to the following problems:
(i) the word problem for $H$,  (ii) the word problem for the free group of rank two, and (iii) the subgroup membership
problem for $A$. 
On the other hand, it is not clear whether this result can be extended to arbitrary HNN-extensions (even if we 
allow polynomial time Turing reductions instead of $\NC^1$-reductions).
A concrete open problem is the complexity of the word problem for an HNN-extension 
$\langle F,t \mid  t^{-1} a t = \varphi(a) \, (a \in A) \rangle$ of a free group $F$
with finitely generated associated subgroups $A$ and $B$. The word problem for a free group is known to be in logspace
(it is a linear group) \cite{LiZa77} and the subgroup membership problem for finitely generated subgroups of a free group can be solved in polynomial
time \cite{AvMa84a}. The problem with Britton reduction in the group $\langle F,t \mid  t^{-1} a t = \varphi(a) \, (a \in A) \rangle$ is that every
Britton reduction step may increase the length of the word by a constant multiplicative factor. This may lead to words of exponential length.
One might try to solve this problem by representing the exponentially long words by so-called 
straight-line programs (context-free grammars that produce a single word). This idea works for the word problems of automorphism
groups and certain group extensions \cite[Section~4.2]{Loh14}. But it is not clear whether the words that arise from Britton reduction can be compressed
down to polynomial size using straight-line programs. The problem arises from the fact that both $A$ and $B$ might be proper subgroups
of $H$. On the other hand, if one of the associated subgroups $A$ and $B$ coincides with the base group $H$ ($G$ is then called
an {\em ascending HNN-extension}) then one can show that the word problem for $G$ is logspace-reducible to the so-called {\em compressed 
word problem} for $H$ (Theorem~\ref{thm-ascending}). The latter problem asks whether a given straight-line program that produces a 
word over the generators of $H$ evaluates to the group identity of $H$. The compressed word problem is known to be solvable
in polynomial time for nilpotent groups, virtually special groups, and hyperbolic groups. For every linear
group one still has a randomized polynomial time algorithm for the compressed word problem; see \cite{Loh14} for details.

Our main result deals with HNN-extensions, where the associated subgroups $A$ and $B$ are allowed to be proper subgroups of the base
group $H$ but are cyclic (i.e., generated by a single element) and {\em undistored} in $H$ (the latter is defined in Section~\ref{sec-HNN-cyclic}).
We show that in this situation the word problem for $G$ is polynomial time Turing-reducible to the {\em compressed power problem} for $H$
(Theorem~\ref{thm-general-WP-HNN}). In the compressed power problem for $H$, the input consists of two elements $p,q \in H$, where
$p$ is given explicitly as a word over a generating set and $q$ is given in compressed form by a straight-line program over a generating
set. The question is whether there exists an integer $z \in \Z$ such that $p^z = q$ in $H$. Moreover, in the positive case we also want to 
compute such a $z$. 

Our main application of Theorem~\ref{thm-general-WP-HNN} concerns hyperbolic groups.
We show that the compressed power problem for a hyperbolic group
can be solved in polynomial time (Theorem~\ref{thm-CPP-hyp}). For this, we make use of the well-known fact that
cyclic subgroups of hyperbolic groups are undistorted. As a consequence of 
Theorems~\ref{thm-CPP-hyp} and~\ref{thm-general-WP-HNN}, the word problem for an HNN-extension of 
a hyperbolic group with cyclic associated subgroups can be solved in polynomial time (Corollary~\ref{coro-hyp}).
One should remark that HNN-extensions of hyperbolic groups with cyclic associated subgroups are in general
not even automatic; a well-known example is the Baumslag-Solitar group $\mathsf{BS}(1,2) = \langle a,t \mid t^{-1} a t = a^2 \rangle$
\cite[Section~7.4]{EpsCHLPT92}.

Corollary~\ref{coro-hyp} can be generalized to fundamental groups of graphs of groups (which generalize HNN-extensions
and amalgamated free products) with hyperbolic vertex groups and
cyclic edge groups (Corollary~\ref{coro-hyp2}). For the special case where all vertex groups are free,
the existence of a polynomial time algorithm for the word problem has been stated in \cite[Remark~5.11]{Weiss15b} without proof.
For a fundamental group of a graph of groups, where all vertex groups are copies of $\mathbb{Z}$, the word problem can be even solved in logspace \cite{Weiss16}.

\section{Groups} \label{sec-groups}
 
 For real numbers $a \leq b$ we denote with $[a,b] = \{ r \in \mathbb{R} \mid a \leq r \leq b\}$
 the closed interval from $a$ to $b$. For $k, \ell \in \N$ we write $[k:\ell]$ for $\{ i \in \N \mid k \leq i \leq \ell\}$.
We use standard notations for words (over some alphabet $\Sigma$). As usual, the empty word is denoted with $\varepsilon$.
 Given a word $w = a_1 a_2 \cdots a_n$ (where $a_1, a_2, \ldots, a_n \in \Sigma$)
and numbers $i,j \in \mathbb{N}$ with $1 \leq i \leq j$ we define $w[i:j] = a_i a_{i+1} \cdots a_{\min\{j,n\}}$.

 For a group $G$ and a subset $\Sigma \subseteq G$, we denote with $\langle \Sigma \rangle$ the subgroup
of $G$ generated by $\Sigma$. It is the smallest subgroup of $G$ containing $\Sigma$.
If $G = \langle \Sigma \rangle$ then $\Sigma$ is a {\em generating set} for $G$. 
The group $G$ is {\em finitely generated (f.g.)} if it has a finite generating set. 
We mostly consider f.g.~groups in this paper. 

Assume that $G = \langle \Sigma \rangle$ and
let $\Sigma^{-1} = \{ a^{-1} \mid a \in \Sigma \}$. For a word
$w = a_1 \cdots a_n$ with $a_i \in \Sigma \cup \Sigma^{-1}$ we 
define the word $w^{-1} = a_n^{-1} \cdots a_1^{-1}$. This defines an involution
on the free monoid $ (\Sigma \cup \Sigma^{-1})^*$. We obtain a surjective monoid homomorphism 
$\pi \colon (\Sigma \cup \Sigma^{-1})^* \to G$
that preserves the involution: $\pi(w^{-1}) = \pi(w)^{-1}$. 
We also say that the word $w$ represents the group element
$\pi(w)$. For words $u,v \in (\Sigma \cup \Sigma^{-1})^*$    
we say that $u=v$ in $G$ if $\pi(u) = \pi(v)$. For $g \in G$ one defines 
$|g|_\Sigma = \min \{ |w| : w \in \pi^{-1}(g)\}$ as the length of a shortest word over 
$\Sigma \cup \Sigma^{-1}$ representing $g$. If $\Sigma$ is clear, we 
also write $|g|$ for $|g|_\Sigma$.
 If $\Sigma = \Sigma^{-1}$ then $\Sigma$ is a finite {\em symmetric} generating set for $G$.

We will describe groups by presentations. In general, if $H$ is a group and $R \subseteq H$ 
is a set of so-called {\em relators}, then we denote with $\langle H \mid R \rangle$ the quotient
group $H/N_R$, where $N_R$ is the smallest normal subgroup of $H$ with $R \subseteq N_R$.
Formally, we have $N_R = \langle \{ h r h^{-1} \mid h \in H, r \in R \} \rangle$.
For group elements $g_i, h_i \in H$ ($i \in I$) we also write
$\langle H \mid g_i = h_i \; (i \in I) \rangle$ for the group 
$\langle H \mid \{ g_i h_i^{-1} \mid i \in I \} \rangle$.

In most cases, one takes a free group for the group $H$ from the previous paragraph.
Fix a set $\Sigma$ and let $\Sigma^{-1}  = \{ a^{-1} \mid a \in \Sigma\}$
be a set of formal inverses of the elements in $\Sigma$ with $\Sigma \cap \Sigma^{-1} = \emptyset$.
A word $w \in (\Sigma \cup \Sigma^{-1})^*$  is called \emph{freely reduced} if it
neither contains a factor $a a^{-1}$ nor $a^{-1} a$ for $a \in \Sigma$. For every word
$w \in (\Sigma \cup \Sigma^{-1})^*$ there is a unique freely reduced
word that is obtained from $w$ by deleting factors $a a^{-1}$ and $a^{-1} a$ ($a \in \Sigma$) as long as 
possible. The {\em free group} generated by $\Sigma$ consists of all freely reduced words together with the multiplication 
defined by $u \cdot v = \mathrm{nf}(uv)$ for $u, v$ freely reduced.  
For a set $R \subseteq F(\Sigma)$ of relators we also write 
$\langle \Sigma \mid R \rangle$ for the group $\langle F(\Sigma) \mid R \rangle$. 
Every group $G$ that is generated by $\Sigma$ can be written
as $\langle \Sigma \mid R \rangle$ for some $R \subseteq F(\Sigma)$. A group
$\langle \Sigma \mid R \rangle$ with $\Sigma$ and $R$ finite is called {\em finitely presented}, and 
the pair $(\Sigma, R)$ is a {\em presentation} for the group $\langle \Sigma \mid R \rangle$.
Given two groups $G_1 = \langle \Sigma_1 \mid R_1 \rangle$ and $G_2 = \langle \Sigma_2 \mid R_2 \rangle$, where w.l.o.g.~$\Sigma_1 \cap \Sigma_2 = \emptyset$,
we define their {\em free product} $G_1 * G_2 = \langle \Sigma_1 \cup \Sigma_2 \mid R_1 \cup R_2 \rangle$.


Consider a group $G$ with the finite symmetric generating set $\Sigma$.
The \emph{word problem for $G$ w.r.t. $\Sigma$} is the following
decision problem:
\begin{description}
\item[input:] a word $w\in \Sigma^*$.
\item[question:]  does $w=1$ hold in $G$?
\end{description}
It is well known that if $\Sigma'$ is another finite
symmetric generating set for $G$, then the word problem for $G$ w.r.t. $\Sigma'$
is logspace many-one reducible to the word problem for $G$ w.r.t. $\Sigma$.
This justifies one to speak just of the word problem for the group $G$.

\paragraph{\bf HNN-extensions.}

HNN-extension is an extremely important operation for constructing groups that arises in all parts of combinatorial group theory. 
Take a group $H$ and a generator $t \not\in H$, from which we obtain the free product $H * \langle t \rangle \cong
H * \Z$. Assume that $A, B \leq H$ are two isomorphic subgroups of $H$ and let $\varphi \colon A \to B$ be an isomorphism.
Then, the group
$\langle H * \langle t \rangle \mid t^{-1} a t = \varphi(a) \; (a \in A) \rangle$
is called the {\em HNN-extension of A with associated subgroups $A$ and $B$} (usually, the isomorphism $\varphi$ is not mentioned explicitly).
The above HNN-extension is usually written as $\langle H, t \mid t^{-1} a t = \varphi(a) \; (a \in A) \rangle$.
Britton \cite{Britt63} proved the following fundamental result for HNN-extensions.
Let us fix a finite symmetric generating set $\Sigma$ for $H$.

\begin{theorem}[Britton's lemma \cite{Britt63}] \label{thm-britton}
Let $G = \langle H, t \mid t^{-1} a t = \varphi(a) \; (a \in A) \rangle$ be an HNN-extension.
If a word $w \in (\Sigma \cup \{t,t^{-1}\})^*$ represents the identity of $G$ then $w$ contains a factor of the form 
$t^{-1} u t$ (resp., $t u t^{-1}$), where $u \in \Sigma^*$ represents an element of $A$ (resp., $B$).
\end{theorem}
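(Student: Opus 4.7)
The plan is to prove a stronger \emph{normal form theorem} from which Britton's lemma follows immediately: if $w = u_0 t^{\epsilon_1} u_1 \cdots t^{\epsilon_n} u_n$ with $n \geq 1$, $u_i \in \Sigma^*$, $\epsilon_i \in \{\pm 1\}$, and $w$ contains no pinch — i.e., no subword $t^{-1} u_i t$ with $u_i$ representing an element of $A$ and no $t u_i t^{-1}$ with $u_i$ representing an element of $B$ — then $w \neq 1$ in $G$. Every word over $\Sigma \cup \{t, t^{-1}\}$ can be parsed in this shape (split at the $t^{\pm 1}$ letters), so Britton's lemma then follows by contraposition.

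I would prove the normal form theorem via the van der Waerden trick: construct a faithful permutation representation of $G$ on a set $X$ of normal-form tuples. Fix right transversals $T_A, T_B \subseteq H$ for the right cosets $A \backslash H$ and $B \backslash H$, with $1$ chosen as the representative of the cosets $A$ and $B$ themselves (so $T_A \cap A = T_B \cap B = \{1\}$). Let $X$ be the set of tuples $(h_0, \epsilon_1, h_1, \ldots, \epsilon_n, h_n)$ with $n \geq 0$, $h_0 \in H$, $\epsilon_i \in \{\pm 1\}$, $h_i \in T_A$ when $\epsilon_i = -1$, $h_i \in T_B$ when $\epsilon_i = 1$, and subject to the pinch-free constraint that $h_i \neq 1$ whenever $\epsilon_i \neq \epsilon_{i+1}$.

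Next I would define an action of each generator of $G$ on $X$. An element $h \in H$ acts by left multiplication on $h_0$. To define the action of $t^{-1}$, decompose $h_0 = a r$ with $a \in A$ and $r \in T_A$, and use the relation $t^{-1} a = \varphi(a) t^{-1}$ to prepend a new syllable, obtaining $(\varphi(a), -1, r, \epsilon_1, h_1, \ldots, \epsilon_n, h_n)$; when $r = 1$ and $\epsilon_1 = 1$, apply the pinch collapse rule to merge the first two syllables. The action of $t$ is defined symmetrically using $T_B$ and the relation $t b = \varphi^{-1}(b) t$. The key verification — and the main obstacle — is to check that this prescription extends to a well-defined action of the whole group $G$: i.e., the actions of $t$ and $t^{-1}$ are mutually inverse, and $t^{-1} a t$ and $\varphi(a)$ act identically for every $a \in A$. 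This amounts to a finite case analysis driven by whether the target tuple has length zero, by the sign of $\epsilon_1$, and by whether the collapse rule is triggered.

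Once the action is established, Britton's lemma follows swiftly. For a pinch-free word $w = u_0 t^{\epsilon_1} u_1 \cdots t^{\epsilon_n} u_n$ with $n \geq 1$, apply the element $w$ to the base point $(1) \in X$, processing letters from right to left. A straightforward induction shows that the tuple obtained after applying the $i$-th $t^{\pm 1}$-letter has $t$-length exactly $i$: the pinch-free hypothesis on $w$ is precisely what ensures that the collapse rule is never triggered. Since $n \geq 1$, the final tuple differs from $(1)$, so $w$ acts non-trivially on $X$ and therefore $w \neq 1$ in $G$.
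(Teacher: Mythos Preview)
The paper does not prove this theorem; it merely states Britton's lemma and cites \cite{Britt63}. So there is no ``paper's own proof'' to compare against. Your proposal is therefore being judged on its own merits.

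Your approach via the van~der~Waerden trick (a permutation action on the set $X$ of normal-form tuples) is a standard and correct route to Britton's lemma. The setup of $X$, the transversals $T_A$, $T_B$ with $1$ as the coset representative of $A$ and $B$, the action of the generators, and the collapse rule are all as they should be, and the case analysis verifying that one obtains a genuine $G$-action is indeed the heart of the argument.

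One small remark on the final paragraph: the induction showing that the $t$-length grows by one at every step needs a slightly stronger inductive hypothesis than merely ``$t$-length equals $i$''. You should also track that, immediately after applying $t^{\epsilon}$, the leading $H$-component of the tuple lies in $A$ (when $\epsilon=1$) or in $B$ (when $\epsilon=-1$); this is automatic from the definition of the action (the leading component is $\varphi^{-1}(b)$ or $\varphi(a)$ respectively). With that in hand, the pinch-free hypothesis on the original word $w$ translates exactly into the condition that the collapse rule is never triggered, because $\bar u_k h'_0 \in A$ (resp.\ $B$) if and only if $\bar u_k \in A$ (resp.\ $B$). This is presumably what you had in mind by ``straightforward induction'', but it is worth making explicit.
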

A subword of the form $t^{-1} u t$ (resp., $t u t^{-1}$), where $u \in \Sigma^*$ represents an element of $A$ (resp., $B$)
is also called a {\em pin}.

A simple corollary of Britton's lemma is that $H$ is a subgroup of the HNN-extension $\langle H, t \mid t^{-1} a t = \varphi(a) \; (a \in A) \rangle$.
Britton's lemma can be also used to solve the word problem for an HNN-extension $\langle H, t \mid t^{-1} a t = \varphi(a) \; (a \in A) \rangle$.
For this we need several assumptions:
\begin{itemize}
\item The word problem for $H$ is decidable.
\item  There is an algorithm that decides whether a given word 
 $u \in \Sigma^*$ represents an element of $A$ (resp., $B$). 
 \item Given a word $u \in \Sigma^*$ that represents an element $a \in A$ (resp., $b \in B$), one can
 compute a word $v \in \Sigma^*$ that represents the element $\varphi(a)$ (resp., $\varphi^{-1}(b)$).
 Let us denote this word $v$ with $\varphi(u)$ (resp., $\varphi^{-1}(u)$).
\end{itemize}
 Then, given a  word $w \in (\Sigma \cup \{t,t^{-1}\})^*$ one replaces pins $t^{-1} u t$ (resp., $t u t^{-1}$) by
 $\varphi(u)$ (resp., $\varphi^{-1}(u)$) in any order, until no more pins occur.  If the final word does not belong to $\Sigma^*$
 then we have $w \neq 1$ in the HNN-extension. If the final word belongs to $\Sigma^*$ then one uses the algorithm
 for the word problem of $H$ to check whether it represents the group identity.
 This algorithm is known as {\em Britton reduction}.
 
 An HNN-extension $G = \langle H, t \mid t^{-1} a t = \varphi(a) \; (a \in A) \rangle$ with 
$\varphi \colon A \to B$ is called {\em ascending} if $A = H$ (it is also called the mapping torus of $\varphi$). 
Note that we do not require $B = H$. Ascending HNN-extensions play an important role in many
group theoretical results. For instance, Bieri and Strebel \cite{BiStre78} proved that if $N$ is a normal
subgroup of a finitely presented group $G$ such that $G/N \cong \Z$ then $G$ is an ascending HNN-extension
of a finitely generated group or contains a free subgroup of rank two.

\paragraph{\bf Hyperbolic groups.}

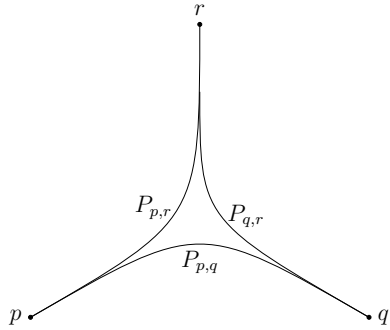
\begin{figure}[t]
 \centering{
 \scalebox{.75}{
\begin{tikzpicture}
\tikzstyle{small} = [circle,draw=black,fill=black,inner sep=.25mm]
\node (p) at (0,0) [small, label=left: \large $p$] {};
\node (q) at (6,0) [small, label=right: \large  $q$] {};
\node (r) at (3,5.19615) [small, label=above: \large  $r$] {};
\draw (0,0) .. controls (3,1.73205)  .. node[pos=.5,below=0mm] {\large $P_{p,q}$} (6,0);
\draw (0,0) .. controls (3,1.73205)  .. node[pos=.5,above=0mm, left=0mm] {\large  $P_{p,r}$} (3,5.19615);
\draw (3,4) .. controls (3,1.73205)  .. node[pos=.5,above=0mm, right=0mm] {\large $P_{q,r}$} (6,0);
\end{tikzpicture}
}}
\caption{\label{fig-geo-tri}The shape of a geodesic triangle in a hyperbolic group}
\end{figure}

Let $G$ be a f.g.~group with the finite symmetric generating set $\Sigma$.
The  {\em Cayley-graph} of $G$ (with respect to $\Sigma$) is the undirected  graph $\Gamma = \Gamma(G)$ with node set
$G$ and all edges $(g,ga)$ for $g \in G$ and $a \in \Sigma$. We view $\Gamma$ as a geodesic metric space,
where every edge $(g,ga)$ is identified with a unit-length interval. It is convenient to label the directed edge
from $g$ to $ga$ with the generator $a$.
The distance between two points $p,q$ is denoted with $d_\Gamma(p,q)$.
Note that $|g|_\Sigma = d_\Gamma(1,g)$ for $g \in G$. For $r \geq 0$,  let
$\mathcal{B}_r(1) = \{ g \in G \mid d_\Gamma(1,g) \leq r \}$.

Paths can be defined in a very general way for metric spaces, but we only need paths
that are induced by words over $\Sigma$.
Given a word $w \in \Sigma^*$ of length $n$, one obtains a unique path $P[w] \colon [0,n] \to \Gamma$,
which is a continuous mapping from the real interval $[0,n]$ to $\Gamma$.
It maps the subinterval $[i,i+1] \subseteq [0,n]$  isometrically 
onto the edge $(g_i, g_{i+1})$ of $\Gamma$, where $g_i$ (resp., $g_{i+1})$ is the group element
represented by the word $w[1:i]$ (resp., $w[1:i+1]$).
The path $P[w]$ starts in $1 = g_0$ and ends in $g_n$ (the group element represented by $w$).
We also say that $P[w]$ is the unique path that starts in $1$ and is labelled with the word $w$.
More generally, for $g \in G$ we denote
with $g \cdot P[w]$ the path that starts in $g$  and is labelled with $w$.
When writing $u \cdot P[w]$ for a word $u \in \Sigma^*$, we mean the path
$g \cdot P[w]$, where $g$ is the group element represented by $u$.

Let $\lambda,\zeta>0$, $\epsilon \geq 0$ be real constants. 
A path  $P \\colon [0,n] \to \Gamma$ of the above form is {\em geodesic} if $d_\Gamma(P(0),P(n)) = n$; it is
a $(\lambda,\epsilon)$-{\em quasigeodesic} if for all points
$p = P(a)$ and $q = P(b)$ we have $|a-b| \leq \lambda \cdot d_\Gamma(p,q) + \epsilon$;
and it is $\zeta$-{\em local} $(\lambda,\epsilon)$-{\em quasigeodesic} if for all points
$p = P(a)$ and $q = P(b)$ with $|a-b| \le \zeta$ we have $|a-b| \leq \lambda \cdot d_\Gamma(p,q) + \epsilon$.

A word $w \in \Sigma^*$ is geodesic if the path $P[w]$ is geodesic, which means
that there is no shorter word representing the same group element from $G$.
Similarly, we define the notion of ($\zeta$-local) $(\lambda,\epsilon)$-quasigeodesic words.
A word $w \in \Sigma^*$ is {\em shortlex reduced} if it is the length-lexicographically smallest word 
that represents the same group element as $w$. For this, we have to fix an arbitrary linear
order on $\Sigma$. 
Note that if $u = xy$ is shortlex reduced then $x$ and $y$ are shortlex reduced  too.
For a word $u \in \Sigma^*$ we denote
with $\shlex(u)$ the unique shortlex reduced word that represents the same group element as $u$
(the underlying group $G$ will be always clear from the context).

A {\em geodesic triangle} consists of three points $p,q,r \in G$ and geodesic paths $P_1 = P_{p,q}$, $P_2 = P_{p,r}$, $P_3 = P_{q,r}$
(the three sides of the triangle),
where $P_{x,y}$ is a geodesic path from $x$ to $y$. We call a geodesic triangle {\em $\delta$-slim}
for $\delta \geq 0$, if for all $i \in \{1,2,3\}$, every point on $P_i$ has distance at most 
$\delta$ from a point on $P_j \cup P_k$, where $\{j,k\} = \{1,2,3\} \setminus \{i\}$.
The group $G$ is called  {\em $\delta$-hyperbolic}, if  every geodesic triangle is 
$\delta$-slim. Finally, $G$ is  hyperbolic, if it is  $\delta$-hyperbolic
for some $\delta \geq 0$. Figure~\ref{fig-geo-tri} shows the shape of a geodesic triangle in a hyperbolic group.
Finitely generated free groups are for instance $0$-hyperbolic.
The property of being hyperbolic is independent of the chosen generating set $\Sigma$. 
The word problem for every hyperbolic group can be decided in real time \cite{Hol00}. 

Fix a $\delta$-hyperbolic group $G$ with the finite symmetric generating set $\Sigma$
for the rest of the section, and let $\Gamma$ be the corresponding geodesic metric space.
Let us write $|g|$ for $|g|_\Sigma$. We need the following lemma:

\begin{lemma}[c.f.~\mbox{\cite[8.21]{ghys1990groupes}}] \label{lemma-cyclic-words-quasi-geo}
Let $g \in G$ be of infinite order and let $n \geq 0$. 
Let $u$ be a geodesic word representing $g$.
Then the word  $u^n$ is $(\lambda,\epsilon)$-quasigeodesic, where 
$\lambda = N|g|$, $\epsilon = 2N^2 |g|^2  + 2N |g|$ and $N = |\mathcal{B}_{2\delta}(1)|$. 
\end{lemma}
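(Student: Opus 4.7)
The plan is to exploit two facts: along $P[u^n]$, the vertices $g^i$ sit at exactly evenly spaced positions (spacing $|g|$) because $u$ is geodesic, and in a $\delta$-hyperbolic group, powers of an element of infinite order cannot cluster, so $|g^k|$ grows with $k$. The entire argument reduces to quantifying the latter in terms of $N = |\mathcal{B}_{2\delta}(1)|$.

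First I would observe that $d(P(a),P(b)) \leq |a-b|$ is automatic (the path has unit speed), so only the reverse inequality needs proof. By rounding the real parameters $a, b$ to the nearest integer multiples of $|g|$, at the cost of an additive error of at most $2|g|$ in $|a-b|$ and likewise in $d(P(a),P(b))$, it suffices to treat $a = i|g|$ and $b = j|g|$ with $0 \le i < j \le n$. Writing $k = j - i$ and $m = |g^k|$, the desired inequality then reduces to the numerical statement $k \le Nm + C$ for an explicit constant $C$ of order $N|g|$. The final constants $\lambda = N|g|$ and $\epsilon = 2N^2|g|^2 + 2N|g|$ are exactly what is needed to absorb both the rounding loss and this additive constant $C$.

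The core step is the numerical inequality. I would prove it by a pigeonhole argument on the ball $\mathcal{B}_{2\delta}(1)$. Pick a geodesic word $v$ representing $g^k$, so $P[v]$ is a geodesic path from $1$ to $g^k$ of length $m$, while $P[u^k]$ is a path of length $k|g|$ with the same endpoints. Using $\delta$-slim triangles along a decomposition of the bigon formed by these two paths into triangles whose apices march along $P[u^k]$ at spacing $|g|$, one shows that each of the $k+1$ vertices $g^0, g^1, \ldots, g^k$ lies within a controlled distance (absorbed into the constants) of some integer vertex on $P[v]$. Since $P[v]$ has only $m+1$ integer vertices, if $k$ exceeds $N(m+1)$ there are $N+1$ indices $i_0 < i_1 < \cdots < i_N$ whose assigned vertices on $P[v]$ coincide. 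Hence $g^{-i_0} g^{i_s} = g^{i_s - i_0} \in \mathcal{B}_{2\delta}(1)$ for $s = 0, 1, \ldots, N$, giving $N+1$ elements in a set of size $N$; by pigeonhole two of them are equal, so $g^{i_s - i_{s'}} = 1$ with $i_s - i_{s'} > 0$, contradicting the assumption that $g$ has infinite order.

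The main obstacle is the fellow-traveller step: a bigon in a $\delta$-hyperbolic space is in general only $O(\delta \log L)$-thin, not $\delta$-thin, so one cannot naively project $g^i$ to $P[v]$ within distance $\delta$. The fix is to replace the single bigon by a chain of genuine geodesic triangles, one per step $g^i \to g^{i+1}$, where at each step the new apex $g^{i+1}$ is $|g|$-close to $g^i$ and $\delta$-slimness controls the distance to $P[v]$; iterating and tracking the accumulated error produces the $|g|$- and $N$-factors that ultimately appear in $\lambda$ and $\epsilon$. Once this is set up, Britton's lemma plays no role and the rest is arithmetic.
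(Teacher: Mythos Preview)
The paper does not supply its own proof of this lemma; it is quoted (with explicit constants) from \cite[8.21]{ghys1990groupes}, so there is no in-paper argument to compare against. I can therefore only comment on the soundness of your proposal, and there is a genuine gap.

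Your pigeonhole needs each $g^i$ to lie within distance $\delta$ of some vertex of the geodesic $P[v]$ from $1$ to $g^k$: only then does ``$N{+}1$ indices with the same assigned vertex'' force $g^{i_s-i_0}\in\mathcal B_{2\delta}(1)$ and hence a collision inside a set of size $N=|\mathcal B_{2\delta}(1)|$. But the distance from $g^i$ to $P[v]$ is, up to an additive $O(\delta)$, the Gromov product $(1\mid g^k)_{g^i}=\tfrac12\bigl(|g^i|+|g^{k-i}|-|g^k|\bigr)$, and bounding this uniformly in $i$ is essentially the quasigeodesicity statement you are trying to prove. Your proposed repair does not close the gap: slimness of a triangle with vertices $g^i,g^{i+1}$ and some auxiliary point controls its three sides relative to one another, not the position of $g^{i+1}$ relative to the fixed geodesic $P[v]$; chaining such triangles along $i=0,\dots,k$ accumulates error linearly in $i$, so the ball in which you could pigeonhole has radius of order $k$ rather than $2\delta$, and the argument becomes circular. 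Saying the accumulated error is ``absorbed into $\lambda,\epsilon$'' does not help, because the count $N$ governing the pigeonhole was fixed as $|\mathcal B_{2\delta}(1)|$ from the start.

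The classical proof avoids projecting the orbit onto a geodesic altogether. One compares the geodesic $\gamma=[1,g^k]$ with its translate $g\gamma=[g,g^{k+1}]$: both are honest geodesics with endpoints $|g|$ apart, so two applications of $\delta$-slimness make them $2\delta$-fellow-travel away from the ends. The pigeonhole is then run over the $m{+}1$ vertices of $\gamma$ (not over the orbit $\{g^i\}$), using the tracking elements in $\mathcal B_{2\delta}(1)$ that this fellow-travelling produces; if $m=|g^k|$ is too small compared with $k$ one extracts a nontrivial relation $g^r=1$. The point is that the $2\delta$-closeness is legitimate between two geodesics, whereas in your setup one of the two objects is the path $P[u^k]$, whose quasigeodesicity is the conclusion rather than a hypothesis. (A small aside: the remark that ``Britton's lemma plays no role'' is out of place---Britton's lemma concerns HNN-extensions and is unrelated to this statement.)
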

Consider two paths $P_1 \colon [0,n_1] \to \Gamma$, $P_2 \colon [0,n_2] \to \Gamma$ and let $\kappa \in \mathbb{R}$,
$\kappa \geq 0$. 
The paths $P_1$ and $P_2$ {\em asynchronously $\kappa$-fellow travel} if there exist two continuous non-decreasing mappings
$\varphi_1 \colon [0,1] \to [0,n_1]$ and $\varphi_2 \colon [0,1] \to [0,n_2]$ such that $\varphi_1(0) = \varphi_2(0) = 0$, $\varphi_1(1) = n_1$,
$\varphi_2(1) = n_2$ and for all $0 \leq t \leq 1$, $d_\Gamma(P_1(\varphi_1(t)), P_2(\varphi_2(t))) \leq \kappa$.
Intuitively, this means that one can travel along the paths $P_1$ and $P_2$ asynchronously with variable speeds such that
at any time instant the current points have distance at most $\kappa$. If $P_1$ and $P_2$ 
 asynchronously $\kappa$-fellow travel, then by slightly increasing $\kappa$ one obtains
  a subset $E \subseteq [0:n_1] \times [0:n_2]$
with (i) $(0,0), (n_1, n_2) \in E$, $d_\Gamma(P_1(i), P_2(j)) \leq \kappa$ for all $(i,j) \in E$ and (iii)
if $(i,j) \in E \setminus \{ (n_1, n_2) \}$ then 
$(i+1,j) \in E$ or $(i,j+1) \in E$. We write $P_1 \approx_\kappa P_2$ in this case. 
Intuitively, this means that  a ladder graph as shown in Figure~\ref{fig-ladder} exists, where the edges connecting the horizontal $P_1$- and $P_2$-labelled 
paths represent paths of length $\le \kappa$ that connect elements from $G$. 

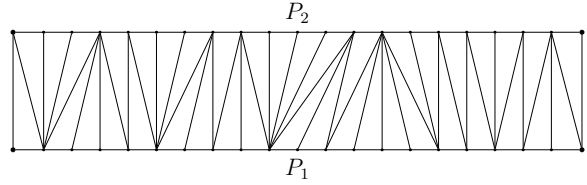
\begin{figure}[t]
 \centering{
 \scalebox{.75}{
\begin{tikzpicture}
  \tikzstyle{small} = [circle,draw=black,fill=black,inner sep=.25mm]
  \tikzstyle{tiny} = [circle,draw=black,fill=black,inner sep=.1mm]
  \tikzstyle{zero} = [circle,inner sep=0mm]

    \node[small] (a1) {} ;
    \node[small,  right = 10cm of a1] (a2) {};
    \node[small,  above = 2cm of a1] (b1) {};
    \node[small,  right = 10cm of b1] (b2) {};
    
    \node[zero,  below = 2.5mm of a1] (a1') {} ;
    \node[zero,  below = 2.5mm of a2] (a2') {} ;
    \node[zero,  above = 2.5mm of b1] (b1') {} ;
    \node[zero,  above = 2.5mm of b2] (b2') {} ;
    
       \draw [-] (a1) to   
      node[pos=.5,below=.5mm]  {\large $P_1$}   
      node[pos=.05, tiny] (1) {}  
      node[pos=.1, tiny] (2) {}  
      node[pos=.15, tiny] (3) {} 
      node[pos=.2, tiny] (4) {}  
      node[pos=.25, tiny] (5) {}  
      node[pos=.3, tiny] (6) {}  
      node[pos=.35, tiny] (7) {}  
      node[pos=.4, tiny] (8) {}  
      node[pos=.45, tiny] (9) {}  
      node[pos=.5, tiny] (10) {}  
      node[pos=.55, tiny] (11) {}  
      node[pos=.6, tiny] (12) {}  
      node[pos=.65, tiny] (13) {}  
      node[pos=.7, tiny] (14) {}  
      node[pos=.75, tiny] (15) {}  
      node[pos=.8, tiny] (16) {}  
      node[pos=.85, tiny] (17) {} 
      node[pos=.9, tiny] (18) {} 
      node[pos=.95, tiny] (19) {}  
        (a2);
        
        \draw [-] (b1) to   
      node[pos=.5,above=.5mm]  {\large $P_2$}   
      node[pos=.05, tiny] (1') {}  
      node[pos=.1, tiny] (2') {}  
      node[pos=.15, tiny] (3') {} 
      node[pos=.2, tiny] (4') {}  
      node[pos=.25, tiny] (5') {}  
      node[pos=.3, tiny] (6') {}  
      node[pos=.35, tiny] (7') {}  
      node[pos=.4, tiny] (8') {}  
      node[pos=.45, tiny] (9') {}  
      node[pos=.5, tiny] (10') {}  
      node[pos=.55, tiny] (11') {}  
      node[pos=.6, tiny] (12') {}  
      node[pos=.65, tiny] (13') {}  
      node[pos=.7, tiny] (14') {}  
      node[pos=.75, tiny] (15') {}  
      node[pos=.8, tiny] (16') {}  
      node[pos=.85, tiny] (17') {} 
      node[pos=.9, tiny] (18') {} 
      node[pos=.95, tiny] (19') {}  
        (b2);

       \draw [-] (a1) to  (b1);
      \draw [-] (1) to  (b1);               
      \draw [-] (1) to (1');
      \draw [-] (1) to  (2');
      \draw [-] (1) to  (3');
      \draw [-] (2) to  (3');
      \draw [-] (3) to  (3');
      \draw [-] (4) to  (3');
      \draw [-] (4) to  (4');
      \draw [-] (5) to  (4');
      \draw [-] (5) to  (5');    
      \draw [-] (5) to  (6');
      \draw [-] (5) to  (7');
      \draw [-] (6) to  (7');
      \draw [-] (7) to  (7');
      \draw [-] (7) to  (8');
      \draw [-] (8) to  (8');
      \draw [-] (9) to  (8');
      \draw [-] (9) to  (9');
      \draw [-] (9) to  (10');
      \draw [-] (9) to  (11');
      \draw [-] (9) to  (12');
       \draw [-] (10) to  (12');
       \draw [-] (11) to  (12');
       \draw [-] (11) to  (13');
       \draw [-] (12) to  (13');
       \draw [-] (13) to  (13');
       \draw [-] (14) to  (13');
       \draw [-] (15) to  (13');
       \draw [-] (15) to  (14');
       \draw [-] (15) to  (15');
       \draw [-] (16) to  (15');
       \draw [-] (16) to  (16');
       \draw [-] (17) to  (16');
       \draw [-] (17) to  (17');
       \draw [-] (17) to  (18');
       \draw [-] (18) to  (18');
       \draw [-] (18) to  (19');
       \draw [-] (19) to  (19');
        \draw [-] (a2) to  (19');
    \draw [-] (a2) to  (b2);
   \end{tikzpicture}}}
\caption{\label{fig-ladder} Paths that asynchronously $\kappa$-fellow travel}
  \end{figure}

\begin{lemma}[c.f.~\mbox{\cite[Lemma 1]{MyNi14}}] \label{lemma-asynch-fellow-travel}
Let $P_1$ and $P_2$ be $(\lambda,\epsilon)$-quasigeodesic paths in $\Gamma$ and assume that
$P_i$ starts in $g_i$, ends in $h_i$, and $d_\Gamma(g_1,g_2), d_\Gamma(h_1,h_2) \leq h$.
Then there is a constant
$\kappa = \kappa(\delta, \lambda,\epsilon, h) \geq h$ such that $P_1 \approx_\kappa P_2$.
\end{lemma}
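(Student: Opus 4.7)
The plan is to combine the Morse stability lemma for quasigeodesics in $\delta$-hyperbolic spaces with the slimness of geodesic quadrilaterals to first obtain a Hausdorff-type closeness between $P_1$ and $P_2$, and then upgrade this closeness to the monotone parametrized correspondence required by $\approx_\kappa$.

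First I would invoke the Morse lemma: there is a constant $M = M(\delta,\lambda,\epsilon)$ such that every $(\lambda,\epsilon)$-quasigeodesic from $x$ to $y$ lies at Hausdorff distance at most $M$ from every geodesic from $x$ to $y$. Choosing geodesics $\gamma_1$ from $g_1$ to $h_1$, $\gamma_2$ from $g_2$ to $h_2$, $\sigma_g$ from $g_1$ to $g_2$ and $\sigma_h$ from $h_1$ to $h_2$ (the last two of length at most $h$) produces a geodesic quadrilateral. Splitting it along a diagonal and applying $\delta$-slimness to the two resulting triangles shows that every point on $\gamma_1$ lies within $2\delta$ of $\gamma_2 \cup \sigma_g \cup \sigma_h$, and hence within $2\delta + h$ of $\gamma_2$. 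Combining with the two Morse estimates, every point on $P_1$ is within $\kappa_0 := 2M + 2\delta + h$ of some point on $P_2$, and symmetrically.

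To upgrade this to asynchronous fellow travel I would construct a lattice path in $[0:n_1]\times[0:n_2]$ inductively. For each integer $i \in [0:n_1]$ let $\phi(i)$ be the smallest index $j \in [0:n_2]$ with $d_\Gamma(P_1(i), P_2(j)) \le \kappa_0$; in particular $\phi(0) = 0$ and $\phi(n_1) = n_2$ are forced since $h \le \kappa_0$. If $\phi(i+1) \ge \phi(i)$, I would connect $(i,\phi(i))$ to $(i+1,\phi(i+1))$ by one right step followed by $\phi(i+1)-\phi(i)$ up steps; if $\phi(i+1) < \phi(i)$, I would overwrite $\phi(i+1) := \phi(i)$ and use only a right step. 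The resulting lattice path automatically satisfies conditions (i) and (iii) of the definition of $\approx_\kappa$; the remaining task is to bound the pairwise distances along the inserted vertical segments.

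The main technical obstacle is controlling the forward jumps $\phi(i+1) - \phi(i)$. The key observation is $d_\Gamma(P_2(\phi(i)), P_2(\phi(i+1))) \le 2\kappa_0 + 1$ via the chain $P_2(\phi(i)) \to P_1(i) \to P_1(i+1) \to P_2(\phi(i+1))$, so the $(\lambda,\epsilon)$-quasigeodesic inequality applied to $P_2$ yields $\phi(i+1) - \phi(i) \le \lambda(2\kappa_0 + 1) + \epsilon$. Consequently every intermediate vertex $P_2(j)$ on an inserted up-segment satisfies $d_\Gamma(P_2(j), P_2(\phi(i))) \le \lambda(2\kappa_0 + 1) + \epsilon$, hence is within $\kappa_0 + \lambda(2\kappa_0 + 1) + \epsilon$ of the corresponding $P_1(i)$; a similar estimate handles the first right step. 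Setting $\kappa := \kappa_0 + 1 + \lambda(2\kappa_0 + 1) + \epsilon$ yields a constant depending only on $\delta, \lambda, \epsilon, h$ that satisfies $\kappa \ge h$, completing the argument.
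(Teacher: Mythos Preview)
The paper does not actually prove this lemma; it merely quotes it from \cite{MyNi14}. So there is no ``paper's own proof'' to compare against, and your task was to supply an independent argument. Your overall strategy---Morse lemma plus slim quadrilaterals to obtain Hausdorff closeness, followed by an explicit monotone lattice path---is the right one, but the second half contains two genuine gaps.

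\textbf{Gap 1: the endpoint claim.} You assert that $\phi(n_1)=n_2$ is ``forced'' because $h\le\kappa_0$. This is false: the inequality $d_\Gamma(h_1,h_2)\le\kappa_0$ only shows $n_2\in J_{n_1}:=\{j:d_\Gamma(P_1(n_1),P_2(j))\le\kappa_0\}$; it does not show that $n_2$ is the \emph{smallest} such $j$. What you do get is $n_2-\phi(n_1)\le 2\lambda\kappa_0+\epsilon$ (the diameter of $J_{n_1}$, from the quasigeodesic inequality for $P_2$), so a bounded number of extra up-steps at the end suffices. This is easily repaired once noticed.

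\textbf{Gap 2: the overwriting step.} When $\phi(i+1)<\phi(i)$ you overwrite and take a single right step, but you never bound how many consecutive overwrites can occur. Your later estimates (both the forward-jump bound and the ``similar estimate'' for right steps) tacitly assume $d_\Gamma(P_1(i),P_2(\phi(i)))\le\kappa_0$, which holds only for the \emph{original} $\phi(i)$, not for an overwritten value. Along a chain of $K$ consecutive overwrites starting at $i_0$, the distance $d_\Gamma(P_1(i_0+j),P_2(\phi(i_0)))$ can a priori grow like $j+\kappa_0$, so without a bound on $K$ the final constant $\kappa$ is not uniform.

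This gap is also fixable, but it takes a real argument. One route: first show that $|\phi(i+1)_{\mathrm{orig}}-\phi(i)_{\mathrm{orig}}|\le D'$ for a constant $D'=D'(\lambda,\epsilon,\kappa_0)$ (because $P_2(\phi(i))$ and $P_2(\phi(i+1))$ both lie in the $(\kappa_0{+}1)$-neighbourhood of $P_1(i{+}1)$, so their parameter difference is bounded by the quasigeodesic inequality for $P_2$). Then, if an overwrite chain starts at $i_0$ and ends at $i_0+K$ (so $\phi(i_0+K+1)_{\mathrm{orig}}\ge\phi(i_0)_{\mathrm{orig}}$), the Lipschitz bound forces $\phi(i_0+K)_{\mathrm{orig}}\ge\phi(i_0)_{\mathrm{orig}}-D'$, whence $d_\Gamma(P_1(i_0),P_1(i_0+K))\le 2\kappa_0+D'$ and finally $K\le\lambda(2\kappa_0+D')+\epsilon$ by the quasigeodesic inequality for $P_1$. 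A similar endpoint argument handles a chain reaching $n_1$. With $K$ bounded, all distances along the lattice path are uniformly controlled and your $\kappa$ can be written down explicitly. You should add this argument (or an equivalent one via nearest-point projection to a common geodesic) to close the proof.
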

For the following lemmas we fix two further constants: 
\begin{equation} \label{constants-L-K}
L = 34\delta+2 \quad\text{ and } \quad K = |\mathcal{B}_{4\delta}(1)|^2 .
\end{equation}

\begin{lemma}[c.f.~\mbox{\cite[Lemma~3.1]{EpsteinH06}}] \label{lemma-Ep-Ho1}
Let $u = u_1 u_2$ be shortlex reduced, where $|u_1| \le |u_2| \le |u_1|+1$.
Let $\tilde u = \shlex(u_2 u_1)$.
If $|\tilde u| \ge 2L+1$ then for every $n \geq 0$, the word $\tilde{u}^n$ is $L$-local $(1,2\delta)$-quasigeodesic.
\end{lemma}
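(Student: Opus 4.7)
The plan is to reduce the local quasigeodesic claim to a short combinatorial statement about factors of $\tilde u \tilde u$ and then contradict the shortlex-reducedness of $u$. Since $|\tilde u| \geq 2L+1 > L$, any subpath of $P[\tilde u^n]$ of length $\ell \le L$ lies inside at most two consecutive copies of $\tilde u$. If such a subpath lies inside a single copy, its label is a factor of the shortlex-reduced word $\tilde u$, hence itself shortlex-reduced and therefore geodesic, so the $(1,2\delta)$-quasigeodesic inequality holds with zero error.

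The interesting case is when the subpath straddles the boundary between two consecutive copies. Its label is then $xy$, where $x$ is a nonempty suffix of $\tilde u$, $y$ is a nonempty prefix of $\tilde u$, and $|x|+|y| \le L$. Both $x$ and $y$ are geodesic, being factors of the shortlex-reduced word $\tilde u$. The task reduces to showing $|\pi(xy)| \ge |x|+|y|-2\delta$, equivalently to bounding the Gromov product $(1,\pi(xy))_{\pi(x)} \le \delta$ in the geodesic triangle with vertices $1, \pi(x), \pi(xy)$. I would argue by contradiction: if this Gromov product exceeded $\delta$, then $\delta$-slimness of the triangle would force the two sides emanating from $\pi(x)$ (labeled $x^{-1}$ and $y$) to asynchronously fellow-travel at distance at most $\delta$ for a strictly positive initial length. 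Unwinding this, there must exist a positive integer $t$ for which the concatenation of the last $t$ letters of $\tilde u$ with the first $t$ letters of $\tilde u$ represents an element of length strictly less than $t$ in $G$.

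To derive the contradiction I would transport this near-cancellation from the seam of $\tilde u \tilde u$ back to an interior cancellation inside $u$, using the conjugacy identity $\pi(u_1)\pi(\tilde u) = \pi(u)\pi(u_1)$. Under this conjugacy the seam between two consecutive copies of $\tilde u$, after left-multiplication by $\pi(u_1)$, corresponds to the point $\pi(u u_1)$, which appears in the Cayley graph inside the second copy of $u$ in the path $P[u\,u\,u_1]$, exactly at the junction between $u_1$ and $u_2$ of that copy. The balance condition $|u_1| \le |u_2| \le |u_1|+1$ guarantees that this junction lies well away from both endpoints of that copy of $u$, so a cancellation at the seam can be spliced into $u$ itself, yielding a representative of $\pi(u)$ strictly shorter than $|u|$ and contradicting the shortlex-reducedness of $u$.

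The main technical obstacle is the transfer in the last paragraph. Assembling the fellow-travel data at the seam into an explicit asynchronous ladder as in Figure~\ref{fig-ladder}, transporting the ladder across the conjugacy by $\pi(u_1)$, and extracting an honest combinatorial shortening of $u$ requires careful tracking of constants; the balance condition is essential at every step in keeping the shortening bypass inside a single copy of $u$. Obtaining the additive defect exactly $2\delta$, rather than a larger multiple of $\delta$, is the tightest part of the bookkeeping.
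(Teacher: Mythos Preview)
The paper does not supply its own proof of this lemma; it is quoted from \cite[Lemma~3.1]{EpsteinH06} without argument. I therefore assess your sketch on its own terms and against the Epstein--Holt argument.

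Your reduction to subwords $xy$ crossing a single seam, with $x$ a suffix and $y$ a prefix of $\tilde u$, is correct, and so is the key geometric observation: the seam vertex $v=\pi(\tilde u)=\pi(u_2u_1)$ also lies on the path $P[u_2uu_1]=P[u_2u_1u_2u_1]$, inside the middle geodesic segment labelled $u$, at the $u_1/u_2$ junction of that segment; and $|\tilde u|\ge 2L+1$ together with the balance condition forces $|u_1|\ge L$, $|u_2|\ge L+1$, so any window of length $\le L$ about $v$ on $P[u_2uu_1]$ is a factor of the geodesic $u$. This is precisely the picture Epstein--Holt exploit.

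The gap is your last step. You pass from ``cancellation at the seam of $\tilde u\tilde u$'' to ``a representative of $\pi(u)$ shorter than $|u|$'' via the conjugacy $\pi(u_1)\pi(\tilde u)=\pi(u)\pi(u_1)$. But that identity equates group elements, not words: $\tilde u=\shlex(u_2u_1)$ is in general a different \emph{word} from $u_2u_1$, so a suffix of $\tilde u$ need not agree with, or even lie close to, a suffix of $u_1$, and no shortening of $u$ is produced by ``splicing''. The Epstein--Holt route does not seek a contradiction. It compares the two paths $P[\tilde u\tilde u]$ and $P[u_2uu_1]$ near their common vertex $v$ using slim triangles (on $1,\pi(u_2),v$ for the incoming copy and its mirror image for the outgoing copy; note $(1,\pi(u_2))_v=\tfrac12(|\tilde u|+|u_1|-|u_2|)\ge L$), so that the endpoints of the seam window $xy$ on $P[\tilde u\tilde u]$ lie within a bounded multiple of $\delta$ of the corresponding endpoints of the geodesic window on $P[u_2uu_1]$. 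Since the latter window has length exactly $|x|+|y|$, the triangle inequality gives the required lower bound on $|\pi(xy)|$ directly. The constant $2\delta$ emerges from this comparison after careful bookkeeping, not from any shortening-of-$u$ argument.
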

The following lemma is not stated explicitly in \cite{EpsteinH06} but is shown in \cite[Section~3.2]{EpsteinH06}
(where the main argument is attributed to Delzant).

\begin{lemma}[c.f.~\cite{EpsteinH06}] \label{lemma-Ep-Ho2}
Let $u$ be geodesic such that $|u| \ge 2L+1$ and for every $n \geq 0$, the word $u^n$ is $L$-local $(1,2\delta)$-quasigeodesic.
Then, in time $\mathcal{O}(|u|)$ one can compute $c \in \mathcal{B}_{4\delta}(1)$ and an integer $1 \leq m \leq K$ such that $(\shlex(c^{-1} u^m c))^n$
is geodesic for all $n \geq 0$.
\end{lemma}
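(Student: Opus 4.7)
The plan is to locate the ``quasi-axis'' of the element $g = \pi(u) \in G$ and to realign some bounded power $u^m$ with it by a short conjugation. First I would apply the local-to-global principle for $\delta$-hyperbolic spaces: since the constant $L = 34\delta + 2$ is safely above the threshold ($\approx 30\delta$) at which $L$-local quasigeodesics are globally quasigeodesic, the bi-infinite path $\cdots u \cdot u \cdot u \cdots$ labels a bi-infinite $(\lambda_0,\epsilon_0)$-quasigeodesic in $\Gamma$ for constants depending only on $\delta$. By Lemma~\ref{lemma-asynch-fellow-travel} (in its bi-infinite form, which is the Morse stability property), this path asynchronously $\kappa$-fellow travels a bi-infinite geodesic line $\gamma$, with $\kappa$ depending only on $\delta$. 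In particular, $g$ acts on $\gamma$ by (approximate) translation.

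Next I would identify the conjugator $c$: by thin-triangle considerations applied to triangles of the form $(1, g^N, g^{-N})$ for large $N$, one finds a point on a geodesic through $1$-translates of $\gamma$ within distance $4\delta$ of $1$; take $c \in \mathcal{B}_{4\delta}(1)$ to be the corresponding group element. The conjugate $c^{-1} g^m c$ then translates along a shifted copy of $\gamma$, and for the correct $m$ the geodesic from $c$ to $g^m c$ concatenates seamlessly with its $g^m$-translates, so that $(\shlex(c^{-1} u^m c))^n$ becomes geodesic for every $n$. To bound $m$, one applies the pigeonhole principle on the pair of entry/exit points (each lying in $\mathcal{B}_{4\delta}(1)$) where the geodesic $\gamma$ passes near the orbit points $g^k$: among any $K+1 = |\mathcal{B}_{4\delta}(1)|^2 + 1$ consecutive orbit points, two yield identical entry/exit data, which means the translation length along $\gamma$ divides the difference of their indices, which is at most $K$.

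Algorithmically, one enumerates all pairs $(c,m) \in \mathcal{B}_{4\delta}(1) \times [1:K]$ (a constant-size search, since $\delta$ is fixed with $G$), forms the word $c^{-1} u^m c$ of length $\mathcal{O}(|u|)$, computes $w = \shlex(c^{-1} u^m c)$ in linear time using the Epstein--Holt machinery for the shortlex automatic structure on hyperbolic groups, and verifies the candidate by a bounded local check: it is enough to test that $w^2$ (or a long-enough fixed power) is geodesic and admits no Dehn-type shortening at the seam, since the axis argument of Step~1 shows that once the local joint behaves geodesically, all longer powers do too. By the existence argument in the previous paragraph at least one pair succeeds.

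The main obstacle is the twofold verification that some $(c,m)$ in this finite enumeration actually produces a word whose \emph{every} power is geodesic, and that a local test certifies this global property. The first point rests on the fact that two geodesic segments in a $\delta$-hyperbolic space with $4\delta$-close endpoints and sufficient length must coincide on their long middle portion, so the translation of $\gamma$ by the correct $c^{-1} g^m c$ is forced to land exactly on $\gamma$. The second point uses that in a hyperbolic group a shortlex reduced word whose small square is geodesic, and which sits in the $\kappa$-neighbourhood of the axis $\gamma$, propagates geodesicity to all powers because any would-be shortcut would violate the slim-triangle bound along $\gamma$. Once both facts are in place, the total running time is dominated by the linear-time shortlex reductions and geodesicity tests, yielding the claimed $\mathcal{O}(|u|)$ bound.
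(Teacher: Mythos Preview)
The paper does not give its own proof of this lemma; it cites \cite[Section~3.2]{EpsteinH06} and attributes the underlying argument to Delzant. Your existence sketch---apply local-to-global to turn the $L$-local quasigeodesic $\cdots u\,u\,u\cdots$ into a global one, pick a nearby bi-infinite geodesic $\gamma$, and run a pigeonhole on pairs of offsets in $\mathcal{B}_{4\delta}(1)$ to extract $c$ and $m\le K=|\mathcal{B}_{4\delta}(1)|^2$---is exactly the shape of Delzant's argument, so on the existence side you are aligned with the cited source.

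Where you depart from Epstein--Holt is the algorithm. They do not enumerate candidate pairs and then verify; the pigeonhole step is itself constructive. One computes $\shlex(u^i)$ for $0\le i\le K$ (each in time $\mathcal{O}(|u|)$, hence $\mathcal{O}(|u|)$ in total since $K$ is a constant), reads off from these geodesics the offsets $c_i=g^{-i}y_i\in\mathcal{B}_{4\delta}(1)$ between $g^i$ and the nearby point $y_i$ on the long geodesic, and returns the first collision $(c_i,c_{i+1})=(c_j,c_{j+1})$ with $i<j$; then $c=c_i$ and $m=j-i$ are output with no a~posteriori check, because the collision itself forces the conjugated power to sit \emph{on} the geodesic rather than merely near it.

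Your enumerate-and-verify variant has a real gap at the verification step. You claim that checking $w^2$ (or some bounded power) geodesic, plus ``no Dehn-type shortening at the seam'', certifies that $w^n$ is geodesic for all $n$. But $w^2$ geodesic only yields that the bi-infinite path $\cdots w\,w\,w\cdots$ is $2|w|$-locally geodesic, hence globally \emph{quasi}geodesic by local-to-global---not geodesic. Your attempted repair (``any would-be shortcut would violate the slim-triangle bound along $\gamma$'') does not close this: a path that $\kappa$-fellow-travels a geodesic line can still fail to be an exact geodesic by an additive error up to $2\kappa$, and nothing in your test rules that out. I do not know a bounded local test for ``all powers geodesic'' that avoids essentially reproducing the constructive pigeonhole; the clean fix is to drop the verification idea and output $(c,m)$ directly from the collision, as Epstein--Holt do.
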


\subsection{Compressed words and the compressed word problem}

Straight-line programs offer succinct representations of long words that contain many repeated substrings.  
We here review the basics, referring to~\cite{Loh14} for a more in-depth introduction. 

Fix a finite alphabet $\Sigma$.  A \emph{straight-line program} $\mathcal{G}$ (SLP for short) 
is a context-free grammar that generates exactly one word $\val(\mathcal{G}) \in \Sigma^*$.  
More formally, an SLP over $\Sigma$ is a triple $\mathcal{G} = (V, S, \rho)$ where
\begin{itemize}
\item
$V$ is a finite set of \emph{variables}, disjoint from $\Sigma$, 
\item
$S \in V$ is the \emph{start variable}, and 
\item
$\rho \colon V \to  (V \cup \Sigma)^*$ is the {\em right-hand side mapping}, which is acyclic in the sense that the binary relation 
$\{ (A,B) \in V \times V \mid \mbox{$B$ appears in $\rho(A)$} \}$ is acyclic.
\end{itemize}
We define the \emph{size} $|\mathcal{G}|$ of $\mathcal{G}$ as $\sum_{A \in V} |\rho(A)|$.
The \emph{evaluation} function $\val = \val_\mathcal{G} \colon (V \cup \Sigma)^* \to \Sigma^*$ is the unique homomorphism
between free monoids such that (i) $\val(a) = a$ for $a \in \Sigma$, and
(ii) $\val(A) = \val(\rho(A))$ for $A \in V$.
We finally take $\val(\mathcal{G}) = \val(S)$.  
We call $\val(\mathcal{G})$ the word defined by the SLP $\mathcal{G}$.

\begin{example}
\label{exam-double}
Let $\Sigma = \{a, b\}$ and fix $n \geq 0$.  
We define $\mathcal{G}_n = (\{A_0, \ldots, A_n\}, A_n, \rho)$,
where $\rho(A_0) = ab$ and $\rho(A_{i+1}) = A_i A_i$ for $0 \leq i \leq n - 1$.
It is an SLP of size $2(n+1)$. We have $\val(A_0) = ab$ and more generally $\val(A_i) = (ab)^{2^i}$.  
Thus $\val(\mathcal{G}_n) = \val(A_n) = (ab)^{2^n}$.
\end{example}
The SLP $\mathcal{G} = (V, S, \rho)$ is \emph{trivial} if $S$ is the only variable and $\rho(S) = \varepsilon = \val(\mathcal{G})$.  
An SLP is in \emph{Chomsky normal form} if it is either trivial or all right-hand sides $\rho(A)$ are of the form $a \in \Sigma$ or $BC$ with $B, C \in V$.
There is a linear-time algorithm that transforms a given SLP $\mathcal{G}$ into an SLP $\mathcal{G}'$ in Chomsky normal such that
$\val(\G) = \val(\G')$; see~\cite[Proposition~3.8]{Loh14}.

The following theorem is the technical main result from \cite{HoltLS19}:
\begin{theorem}[c.f.~\cite{HoltLS19}] \label{main-HLS}
Let $G$ be a hyperbolic group with the finite symmetric generating set $\Sigma$.
Given an SLP $\G$ over $\Sigma$ one can compute in polynomial time an 
SLP $\mathcal{H}$ over $\Sigma$ such that $\val(\mathcal{H}) = \shlex(\val(\G))$.
\end{theorem}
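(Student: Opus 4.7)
My plan is to compute, by induction on the DAG of $\G$ (converted to Chomsky normal form in linear time), an SLP $\mathcal{H}_A$ for each variable $A$ with $\val(\mathcal{H}_A) = \shlex(\val(A))$. The base case where $\rho(A)$ is a terminal is immediate. The inductive case concerns a production $A \to BC$: given SLPs $\mathcal{H}_B$ and $\mathcal{H}_C$ representing shortlex words $u = \val(\mathcal{H}_B)$ and $v = \val(\mathcal{H}_C)$, I must construct an SLP for $\shlex(uv)$ whose size is polynomial in $|\mathcal{H}_B| + |\mathcal{H}_C|$. Iterating up the DAG then yields the required $\mathcal{H} = \mathcal{H}_S$.

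The geometric core of the construction is a cancellation lemma, proved from $\delta$-hyperbolicity: there is a constant $C = C(\delta, \Sigma)$ and a splitting $u = u_1 u_2$, $v = v_1 v_2$ with $|u_2| = |v_1|$, together with a word $c \in \Sigma^{\le C}$, such that $u_2 v_1 = c$ in $G$ and $\shlex(uv) = u_1' \cdot c' \cdot v_2'$ where $u_1'$ (resp.\ $v_2'$) agrees with $u_1$ (resp.\ $v_2$) outside a bounded-length window near the seam, and $|c'| \le C'$ for some constant $C'$. This follows from $\delta$-slimness of the geodesic triangle with vertices $1$, $\pi(u)$, $\pi(uv)$ combined with Lemma~\ref{lemma-asynch-fellow-travel}: the side from $1$ to $\pi(uv)$ asynchronously fellow-travels the broken path labelled by $u$ followed by $v$, matching up a prefix with $u_1$ and a suffix with $v_2$ and passing within a bounded neighborhood of the tripod meeting point in between. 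That shortlex reduction of $u_1 c v_2$ only alters a bounded window around $c$ relies on the fact that $u$ and $v$ are themselves shortlex and that the shortlex language of a hyperbolic group is regular (shortlex-automatic structure), so any violation of shortlex-reducedness is witnessed locally by a finite automaton and cannot propagate unboundedly far from the seam.

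Given this lemma, the algorithm has two ingredients. First, standard SLP manipulation: from an SLP for a word $w$ and a position $i$ one can build SLPs for $w[1{:}i]$ and its complementary suffix with only polynomial blow-up, and concatenation with a short word $c'$ is trivial, so once the correct splitting index $n = |u_2| = |v_1|$ and the correct bridge $c$ are known, $\mathcal{H}_A$ is assembled in polynomial time. Second, $n$ is located by binary search on $\{0, 1, \dots, \min(|u|,|v|)\}$; although these lengths are exponential in $|\G|$, $\log \min(|u|,|v|)$ is polynomial, so only polynomially many probes are needed. For each probe one enumerates the $\mathcal{O}(|\Sigma|^C)$ candidate bridges $c$ and uses Plandowski's polynomial-time equality test for SLPs, together with the compressed-word-problem algorithm for $G$ (which is in $\mathsf{P}$ for hyperbolic groups), to verify $u_2 v_1 = c$ in $G$ and to check the local shortlex condition in the bounded window around the seam by simulating the shortlex acceptor.

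The step I expect to be hardest is proving the cancellation lemma in a form sharp enough to drive the binary search: one needs a uniform bound on the size of the reduction window at the seam, a characterization of the correct $n$ that is uniquely determined by a locally-checkable condition, and bridging candidates drawn from a set of bounded size independent of $|u|$ and $|v|$. Assembling such a formulation uses precisely the fellow-traveler and thin-triangle machinery laid out in Section~\ref{sec-groups}, together with the Epstein--Holt local-to-global control on quasi\-geodesics implicit in Lemmas~\ref{lemma-Ep-Ho1} and~\ref{lemma-Ep-Ho2}; once it is in place, the inductive step preserves polynomial SLP size and the theorem follows.
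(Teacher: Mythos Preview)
The paper does not prove Theorem~\ref{main-HLS}; it is quoted from \cite{HoltLS19} as a black box, and Theorem~\ref{CWP-hyp} is then recorded as an immediate corollary. There is therefore no in-paper argument to compare your proposal against.

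Assessed on its own, your sketch has the right geometric picture (thin triangles force a single cancellation seam with a bounded bridge), but there are two genuine gaps. First, a circularity: your binary search tests $u_2v_1=c$ in $G$ by appealing to the compressed word problem for $G$, yet in this paper---and in \cite{HoltLS19}---polynomial-time solvability of the compressed word problem is \emph{derived from} Theorem~\ref{main-HLS}. You would need either an independent route to these equality checks on SLP-compressed inputs, or a simultaneous induction that proves both statements together level by level. Second, the ``bounded-window'' assertion---that $\shlex(u_1cv_2)$ agrees with $u_1$ and $v_2$ outside a constant-length neighbourhood of $c$---does not follow merely from regularity of the shortlex language or from Lemma~\ref{lemma-asynch-fellow-travel}. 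Fellow-travelling guarantees that $\shlex(u_1c)$ and $u_1$ stay within bounded distance as paths, but not that they agree letter-for-letter outside a bounded suffix; two distinct shortlex words can fellow-travel throughout their length. Establishing such a repair lemma for hyperbolic groups is a nontrivial statement in its own right that your outline does not supply, and the argument in \cite{HoltLS19} does not proceed via a cancellation lemma of this shape: it works instead by simulating the transducers of the shortlex (bi)automatic structure directly on compressed representations, which sidesteps both issues at once.
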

If $G$ is a f.g.~group with the finite and symmetric generating set $\Sigma$, then we define the {\em compressed word problem}
of $G$ as the following problem:
\begin{description}
\item[input:] an  SLP $\mathcal{G}$  over $\Sigma$.
\item[question:]  does $\val(\mathcal{G})$ represent the group identity of $G$?
\end{description}
An immediate consequence of Theorem~\ref{main-HLS} is the following result:

\begin{theorem}[c.f.~\cite{HoltLS19}] \label{CWP-hyp}
The compressed word problem for a hyperbolic group can be solved in polynomial time.
\end{theorem}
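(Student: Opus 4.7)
The plan is to derive this as an immediate consequence of Theorem~\ref{main-HLS}. Given a hyperbolic group $G$ with finite symmetric generating set $\Sigma$ and an input SLP $\mathcal{G}$ over $\Sigma$, I want to decide whether $\val(\mathcal{G}) = 1$ in $G$.

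First I would invoke Theorem~\ref{main-HLS} to compute in polynomial time an SLP $\mathcal{H}$ over $\Sigma$ with $\val(\mathcal{H}) = \shlex(\val(\mathcal{G}))$. The key observation is then that $\val(\mathcal{G}) = 1$ in $G$ if and only if $\shlex(\val(\mathcal{G})) = \varepsilon$: indeed, the empty word is the unique shortlex-minimal representative of the group identity, since any nonempty shortlex-reduced word has positive length and hence cannot represent $1$. Therefore it suffices to test whether $\val(\mathcal{H}) = \varepsilon$.

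Checking whether an SLP produces the empty word is trivial: one computes the length $|\val(A)|$ for every variable $A$ of $\mathcal{H}$ by a straightforward bottom-up pass along the acyclic dependency order (using $|\val(a)| = 1$ for $a \in \Sigma$ and $|\val(A)| = \sum_{X \in \rho(A)} |\val(X)|$, treating these as integers; overflow is not an issue since we only need to test whether the length equals $0$, which can be done by propagating a Boolean flag indicating emptiness). The whole procedure runs in time polynomial in $|\mathcal{G}|$, so the compressed word problem for $G$ lies in $\mathsf{P}$.

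There is no real obstacle here, since the heavy lifting is done by Theorem~\ref{main-HLS}; the argument reduces the compressed word problem to compressed shortlex normalization followed by an emptiness check on an SLP.
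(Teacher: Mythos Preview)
Your proposal is correct and follows exactly the approach the paper intends: the paper states Theorem~\ref{CWP-hyp} as ``an immediate consequence of Theorem~\ref{main-HLS}'' without further proof, and your argument spells out precisely this consequence (compute $\shlex(\val(\mathcal{G}))$ as an SLP and test for emptiness).
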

The compressed word problem turns out to be useful for the solution of the word problem 
for an ascending HNN-extension:

\begin{theorem} \label{thm-ascending}
Let $H$ be a finitely generated group.
The word problem for an ascending HNN-extension 
$G=\langle H, t \mid t^{-1} a t = \varphi(a) \; (a \in H) \rangle$ is logspace-reducible to the compressed
word problem for $H$.
\end{theorem}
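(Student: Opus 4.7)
The plan is to exploit the ascending hypothesis to rewrite the input word $w$ into a canonical form in $G$ that can be encoded directly by an SLP over $\Sigma$. Writing $w = u_0 t^{e_1} u_1 t^{e_2} \cdots t^{e_n} u_n$ with every $u_i \in \Sigma^*$ and $e_i \in \{-1,+1\}$, I introduce the partial sums $\sigma_i = e_1 + \cdots + e_i$ (so $\sigma_0 = 0$) and the maximum $M = \max_{0 \le i \le n} \sigma_i \ge 0$. The identity $u\,t = t\,\varphi(u)$ holds for every $u \in \Sigma^*$ precisely because $A = H$ in the ascending case, and dually $t^{-1} u = \varphi(u) t^{-1}$. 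A straightforward induction on the prefix length, using these two rules together with $t t^{-1} = 1$, establishes the normal form
\begin{equation}\label{eq-nf}
 w \;=\; t^{M}\, h \, t^{-(M-\sigma_n)} \quad \text{in } G, \quad \text{where} \quad h \;=\; \prod_{i=0}^{n} \varphi^{M-\sigma_i}(u_i) \in H.
\end{equation}
Every exponent $M - \sigma_i$ is non-negative, so each $\varphi^{M - \sigma_i}(u_i)$ is a well-defined word over $\Sigma$.

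Next I would observe that $w = 1$ in $G$ if and only if $\sigma_n = 0$ and $h = 1$ in $H$. Two standard facts give the forward direction: (i) the map $G \to \Z$ sending $t \mapsto 1$ and every element of $H$ to $0$ is a well-defined homomorphism (the defining relations $t^{-1} a t = \varphi(a)$ map to $0 = 0$), and it forces $\sigma_n = 0$; and (ii) once $\sigma_n = 0$, the equation $t^M h t^{-M} = 1$ gives $h = 1$ in $G$ by injectivity of conjugation, hence $h = 1$ in $H$ because $H$ embeds into $G$ by Britton's lemma.

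What remains is to construct, in logspace, an SLP $\G$ over $\Sigma$ that evaluates to $h$ when $\sigma_n = 0$ (and to a fixed word $v \in \Sigma^*$ with $v \ne 1$ in $H$ otherwise; the case of trivial $H$ reduces to $G \cong \Z$ and is immediate). The key ingredient is a compact SLP for $\varphi^k(c)$ for every $c \in \Sigma$ and every $0 \le k \le M$: I introduce variables $C_k^c$ with rules
\[
 \rho(C_0^c) = c, \qquad \rho(C_{k+1}^c) = C_k^{d_1} C_k^{d_2} \cdots C_k^{d_\ell} \quad \text{where } \varphi(c) = d_1 \cdots d_\ell,
\]
so that by induction $\val(C_k^c) = \varphi^k(c)$. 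Because $|\Sigma|$ and the words $\varphi(c)$ are constants of $G$, these $O(M \cdot |\Sigma|) = O(|w|)$ rules can be emitted in logspace by iterating over $(k,c)$. The start variable $S$ is then given the right-hand side produced by a single left-to-right scan of $w$ in which $\sigma_i$ is maintained in an $O(\log |w|)$-bit counter: for each letter $c$ occurring inside the block $u_i$, one emits the symbol $C_{M-\sigma_i}^{c}$. By \eqref{eq-nf} the resulting SLP evaluates to $h$.

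The main obstacle I anticipated is the well-known exponential blow-up of Britton reduction: after iterating $\varphi$, a block $u_i$ can come to represent a word of length $2^{\Theta(|w|)}$. The ascending hypothesis is precisely what saves us here: throughout the rewriting, all blocks remain inside $H$ and are only ever acted on by the forward morphism $\varphi$, never by $\varphi^{-1}$ on inputs whose membership in $B$ is unknown. This confines the required compression to the $\varphi^k$-tower above single letters of the fixed alphabet, which admits a linear-size SLP, and assembling the pieces then yields the claimed logspace many-one reduction from the word problem for $G$ to the compressed word problem for $H$.
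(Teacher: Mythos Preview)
Your argument is correct and follows essentially the same route as the paper's proof: both conjugate $w$ by a suitable power of $t$ so that every block $u_i$ is hit by a non-negative power of $\varphi$, reduce to checking $\sigma_n=0$ together with an identity in $H$, and encode the resulting product $\prod_i \varphi^{k_i}(u_i)$ by an SLP built from the tower $C_k^c \mapsto \varphi^k(c)$. The only cosmetic difference is that the paper shifts by $t^{-n}$ (using $n-s_i\ge 0$ since $s_i\le i$) whereas you shift by the tighter $t^{-M}$ with $M=\max_i\sigma_i$; both choices yield a linear-size SLP computable in logspace.
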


\begin{proof}
The proof is similar to corresponding results for automorphism groups and semi-direct products \cite[Section~4.2]{Loh14}.
Let us fix a finite and (w.l.o.g.) symmetric generating set $\Sigma$ for $H$. 
Consider an input word $w \in (\Sigma \cup \{t,t^{-1}\})^*$ and  write 
$w = w_0 t^{\epsilon_1} w_1 t^{\epsilon_2} w_2 \cdots t^{\epsilon_n} w_n$,
where $w_i \in \Sigma^*$ for $0 \leq i \leq n$ and $\epsilon_i \in \{-1,1\}$ for $1 \leq i \leq n$.
Let $s_k = \sum_{i=1}^k \epsilon_i$ for $0 \leq k \leq n$.
Clearly, $w = 1$ in $G$ if and only if $t^{-n} w t^{n} = 1$ in $G$ if and only if 
\begin{equation} \label{eq-ascending}
t^{-n} \bigg(\prod_{i=0}^n t^{s_i} w_i t^{-s_i} \bigg) t^{s_n+n} = \bigg(\prod_{i=0}^n  t^{s_i-n} w_i t^{n-s_i} \bigg) t^{s_n} = 
\bigg(\prod_{i=0}^n \varphi^{n-s_i}(w_i) \bigg) t^{s_n} = 1
\end{equation}
in $G$. Here, we identify $\varphi$ with a homomorphism $\tilde\varphi \colon \Sigma^* \to \Sigma^*$ such that for every
$a \in \Sigma$, the word $\tilde\varphi(a)$ represents the group element $\varphi(a) \in G$.
By Britton's lemma, \eqref{eq-ascending} is equivalent to $s_n = 0$ (this can be checked in logspace) and
$\prod_{i=0}^n \varphi^{n-s_i}(w_i) = 1$ in $H$. The latter is an instance of the compressed word problem.
We can easily (in logspace) compute an SLP for the word $\prod_{i=0}^n \varphi^{n-s_i}(w_i)$, see e.g.~\cite[Lemma 3.12]{Loh14}.
\qed
\end{proof}
We will also need a generalization of straight-line programs, known as composition systems
\cite[Definition~8.1.2]{Hag00}
(in \cite{Loh14} they are called \emph{cut straight-line programs}). 
A \emph{composition system} over $\Sigma$ is a tuple $\mathcal{G} = (V, S, \rho)$, with $V$ and $S$ as for an SLP, and where we also allow, as right-hand sides for $\rho$, expressions of the form $B[i:j]$, with $B \in V$ and $i,j \in \mathbb{N}$, $1 \leq i \leq j$.  
The numbers $i$ and $j$ are stored in binary encoding.
We again require $\rho$ to be acyclic.
When $\rho(A) = B[i:j]$ we define $\val(A) = \val(B)[i:j]$.
We define the \emph{size} $|\mathcal{G}|$ of the composition system $\mathcal{G}$ as 
the total number of occurrences of symbols from $V \cup \Sigma$ in all right-hand sides. Hence, a right-hand $B[i:j]$
contributes $1$ to the size, and we ignore the numbers $i,j$. Adding the bit lengths of the numbers $i$ and $j$ to the size
$|\mathcal{G}|$ would only lead to a polynomial blow-up for $|\mathcal{G}|$.
To see this, first normalize the composition system so that all right-hand sides have the form
$a$, $BC$ or $B[i:j]$ with $a \in \Sigma$ and $B,C \in V$; analogously to the Chomsky normal form of SLPs
this can be achieved in polynomial time. If $n$ is the number of variables of the resulting composition system, then 
every variable produces a string of length at most $2^n$. Hence, we can assume that all numbers $i,j$ that appear in
a right-hand side $B[i:j]$ are of bit length $\mathcal{O}(n)$. 

We can now state an important result of Hagenah; see~\cite[Algorithmus~8.1.4]{Hag00} as well as~\cite[Theorem~3.14]{Loh14}.
\begin{theorem}
\label{thm-hagenah}
There is a polynomial-time algorithm that, given a composition system $\mathcal{G}$, computes an SLP $\mathcal{G}'$ such that 
$\val(\mathcal{G}) = \val(\mathcal{G}')$.
\end{theorem}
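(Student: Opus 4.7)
The plan is to transform $\mathcal{G}$ into an equivalent SLP by systematically eliminating the cut productions $A \to B[i:j]$. First I normalize $\mathcal{G}$ so that every right-hand side has the form $a \in \Sigma$, $BC$ with $B,C \in V$, or $B[i:j]$; this can be achieved in polynomial time, analogously to Chomsky normal form for SLPs. A bottom-up traversal of the definition DAG then precomputes $|\val(X)|$ for every variable $X$; by the remark preceding the theorem these lengths have polynomial bit length, so all subsequent arithmetic on the indices $i,j$ runs in polynomial time.

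Next, I eliminate cut productions by processing them roughly in reverse topological order with respect to $\rho$. For a rule $A \to B[i:j]$ I inspect $\rho(B)$ and rewrite accordingly. If $\rho(B) = a \in \Sigma$, the rule becomes $A \to a$ (or $A \to \varepsilon$ if the window is empty). If $\rho(B) = E[k:\ell]$, the nested cut flattens via $\val(B)[i:j] = \val(E)[k+i-1:k+j-1]$ to $A \to E[k+i-1:k+j-1]$. If $\rho(B) = CD$, let $m = |\val(C)|$: when $j \le m$, replace by $A \to C[i:j]$; when $i > m$, replace by $A \to D[i-m:j-m]$; and when $i \le m < j$, introduce fresh variables $A_1,A_2$ with $\rho(A_1) = C[i:m]$ and $\rho(A_2) = D[1:j-m]$, and set $\rho(A) = A_1 A_2$. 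Each rewrite clearly preserves $\val(A)$, and once no cut rules remain we have an SLP $\mathcal{G}'$ with $\val(\mathcal{G}') = \val(\mathcal{G})$.

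The main obstacle is bounding the number of auxiliary variables introduced so that the construction runs in polynomial time. The key observation is that the splitting case only produces cuts of the form $C[i:|\val(C)|]$ (a suffix of $\val(C)$) or $D[1:j']$ (a prefix of $\val(D)$). Moreover, processing a suffix-cut $C[i:|\val(C)|]$ with $\rho(C)=EF$ produces only another suffix-cut (of $E$ or $F$) and never branches, and symmetrically for prefix-cuts. Hence the recursive processing of any single suffix- or prefix-cut follows one downward path in the definition DAG and introduces at most $|V|$ auxiliary variables. A general cut $A \to B[i:j]$ likewise descends along a single path through the three-way case analysis until it either terminates, flattens with a nested cut, or splits once into a suffix-cut plus a prefix-cut. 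Starting from at most $|\mathcal{G}|$ original cut rules, the total number of variables is therefore $O(|V|\cdot|\mathcal{G}|)$. Each rewrite takes polynomial time since all indices stay of polynomial bit length, so the whole construction is polynomial, proving the theorem.
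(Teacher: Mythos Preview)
The paper does not actually prove this theorem; it merely cites Hagenah's thesis \cite{Hag00} and \cite[Theorem~3.14]{Loh14}. Your argument is correct and is essentially the standard proof found in those references: normalize, push each cut $B[i:j]$ down through the right-hand side of $B$, and use the key observation that once a general interval splits it produces one prefix-cut and one suffix-cut, each of which thereafter descends along a single path in the derivation DAG and hence spawns only $O(|V|)$ fresh variables. Two small remarks: (i) since you process in reverse topological order, the case $\rho(B)=E[k{:}\ell]$ never actually arises (by the time you reach $A$, $B$'s rule is already cut-free), so that clause is redundant but harmless; (ii) your phrase ``never branches'' is slightly loose---in the suffix case $C[i{:}|\val(C)|]$ with $\rho(C)=EF$ and $i\le|\val(E)|$ you do emit a concatenation $A_1 F$, but the point (which you use correctly) is that only \emph{one} new cut variable $A_1$ is created, so the recursion tree is a path.
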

It will be convenient to allow in composition systems also more complex right-hand sides.
For instance $(ABC)[i:j] D$ would first concatenate the strings produced from $A$, $B$, and $C$. From the resulting 
string the substring from position $i$ to position $j$ is cut out and this substring is concatenated with the string 
produced by $D$.

\section{The compressed power problem}

In the next section we want to study the word problem in HNN-extensions with cyclic associated subgroups.
For this, the following computational problem turns out to be important.
Let $G$ be a f.g.~group with the finite symmetric generating set $\Sigma$.
We define the {\em compressed power problem} for $G$
as the following problem:
\begin{description}
\item[input:] a word $w\in \Sigma^*$ and an SLP $\G$ over $\Sigma$.
\item[output:]  the binary coding of an integer $z \in \Z$ such that $w^z = \val(\G)$ in $G$
if such an integer exists, and {\it no} otherwise.
\end{description}

\begin{theorem} \label{thm-CPP-hyp}
For every hyperbolic group $G$, the compressed power problem can be solved in polynomial time.
\end{theorem}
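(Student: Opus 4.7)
The plan is to reduce the compressed power problem in $G$ to a constant number of instances of the compressed word problem (Theorem~\ref{CWP-hyp}), using the structural analysis of cyclic subgroups of a hyperbolic group provided by Lemmas~\ref{lemma-cyclic-words-quasi-geo},~\ref{lemma-Ep-Ho1}, and~\ref{lemma-Ep-Ho2}. First I would dispose of the torsion case: in a hyperbolic group the orders of finite-order elements are bounded by a constant $M = M(G)$, so I check whether $w^i = 1$ in $G$ for $i = 1, \ldots, M$ via Theorem~\ref{CWP-hyp}; if $w$ has finite order $n_0$, then $w^z = \val(\G)$ depends only on $z \bmod n_0$, and I verify each residue $r \in [0:n_0-1]$ by the same theorem. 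From here on I assume that $w$ has infinite order.

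Next I apply the structural lemmas. Compute a shortlex word $u$ representing $w$ (via Theorem~\ref{main-HLS} on the trivial SLP producing $w$). By passing to a bounded power $w^{k_0}$ if necessary and iterating over residues modulo $k_0$, I may assume $|u| \ge 2L+1$ and, for the near-equal split $u = u_1 u_2$, also $|\tilde u| \ge 2L+1$ where $\tilde u = \shlex(u_2 u_1)$ as in Lemma~\ref{lemma-Ep-Ho1}; such a constant $k_0 = k_0(G)$ exists because $|\shlex(w^k)|$ grows linearly in $k$ by Lemma~\ref{lemma-cyclic-words-quasi-geo} and the set of ``short'' elements of $G$ is finite. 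Lemma~\ref{lemma-Ep-Ho1} then gives that $\tilde u^n$ is $L$-local $(1,2\delta)$-quasigeodesic for all $n$, and Lemma~\ref{lemma-Ep-Ho2} yields in polynomial time an element $c \in \mathcal{B}_{4\delta}(1)$ and an integer $1 \le m \le K$ such that $v := \shlex(c^{-1} \tilde u^m c)$ has the property that $v^n$ is geodesic for every $n \ge 0$ (and hence for every $n \in \Z$, since $(v^{-1})^{|n|}$ is geodesic whenever $v^{|n|}$ is). Setting $C := u_1 c$ and using the identity $\tilde u = u_1^{-1} w u_1$ in $G$ gives the central decomposition $w^{mn} = C v^n C^{-1}$ in $G$ for every $n \in \Z$.

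Now I can solve the problem. Writing the unknown exponent as $z = mn + r$ with $r \in [0:m-1]$, the equation $w^z = \val(\G)$ in $G$ is equivalent to $v^n = C^{-1} \val(\G) w^{-r} C$ in $G$. Since the left-hand side is a geodesic word of length $|n| \cdot |v|$, a solution forces the shortlex normal form of the right-hand side to have length exactly $|n| \cdot |v|$, which uniquely determines $|n|$. Concretely, for each of the (constantly many) residues $r \in [0:m-1]$ I build an SLP $\mathcal{F}_r$ for $C^{-1} \val(\G) w^{-r} C$ (using $\G$ as a subprogram), apply Theorem~\ref{main-HLS} to obtain an SLP for $\shlex(\val(\mathcal{F}_r))$, read off its length $\ell_r$ in polynomial time, and if $|v|$ divides $\ell_r$ I get the two candidate exponents $z_{\pm} = \pm(\ell_r/|v|)\, m + r$ (both of polynomial bit-length, since $|z|$ is at most exponential in the input by the linear lower bound on $|w^z|$). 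Each candidate is then verified by constructing an SLP of size $\mathcal{O}(|w| + \log|z|)$ for $w^z \cdot \val(\G)^{-1}$ via iterated squaring and invoking Theorem~\ref{CWP-hyp}; I output the first successful $z$, or \emph{no} if none works.

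The main obstacle is the structural setup: obtaining the decomposition $w^{mn} = C v^n C^{-1}$ with $v^n$ geodesic in a form usable by SLP-based computations, which includes carefully handling the edge cases where $|u|$ or $|\tilde u|$ is too small to apply Lemmas~\ref{lemma-Ep-Ho1} and~\ref{lemma-Ep-Ho2} directly (necessitating replacement of $w$ by a bounded power and iteration over residues). Once this decomposition is in hand, the reduction to a constant number of calls to Theorems~\ref{main-HLS} and~\ref{CWP-hyp}, combined with routine polynomial-time SLP manipulations (concatenation, iterated squaring, and computing the length of an SLP's value), yields a polynomial-time algorithm.
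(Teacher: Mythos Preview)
Your approach is correct and takes a genuinely different route than the paper's. Both begin with the torsion case and the structural reduction via Lemmas~\ref{lemma-Ep-Ho1} and~\ref{lemma-Ep-Ho2}, but the paper is content with $(\lambda,\epsilon)$-quasigeodesic powers and then, after normalizing $\val(\G)$ to shortlex form via Theorem~\ref{main-HLS}, runs a two-stage dynamic program over the SLP: for each variable $X$, boundary elements $a,b \in \mathcal{B}_\kappa(1)$, and prefixes/suffixes $w_1,w_2$ of $w$, it uses the asynchronous fellow-traveller property (Lemma~\ref{lemma-asynch-fellow-travel}) to tabulate the unique $z$ (if any) with $\val(X) = a\, w_2 w^z w_1\, b$ in $G$ and $P[\val(X)] \approx_\kappa a \cdot P[w_2 w^z w_1]$, propagating these values bottom-up through the grammar. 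You instead push all the way to geodesic powers (by first passing to a bounded power $w^{k_0}$), apply Theorem~\ref{main-HLS} to the target $C^{-1}\val(\G)w^{-r}C$, and read off $|n|$ directly from the length of the resulting shortlex word; this bypasses the fellow-traveller bookkeeping entirely and is considerably shorter, at the cost of leaning more heavily on Theorem~\ref{main-HLS} as a black box. One minor point to tighten: your justification that a fixed $k_0 = k_0(G)$ also forces $|\tilde u| \geq 2L+1$ (not only $|u| \geq 2L+1$) is thin, since finiteness of short elements does not apply directly to the conjugate represented by $\tilde u$; the claim does follow because the stable length $\lim_n |g^n|/n$ is conjugacy-invariant and bounded below by $1/N$ for every infinite-order $g$ (a consequence of Lemma~\ref{lemma-cyclic-words-quasi-geo}), whence $|\tilde u| \geq k_0/N$.
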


\begin{proof}
Fix the word $w \in \Sigma^*$ and the SLP $\G = (V,\rho,S)$ over $\Sigma$, w.l.o.g. in Chomsky normal form.
We have to check whether the equation 
\begin{equation} \label{eq-w^x}
w^z = \val(\G)
\end{equation}
has a solution in $G$, and compute in the positive case a solution $z \in \Z$. Let $g$ be the group element represented by $w$.

In a hyperbolic group $G$ the order of torsion elements is bounded by a fixed constant 
that only depends on $G$, see also the proof of \cite[Theorem~6.7]{MyNiUs14}.
This allows to check in polynomial time whether $g$ has finite order in $G$.
If $g$ has finite order, say $d$, then it remains to check for all $0 \leq i \leq d-1$ whether
$w^i = \val(\G)$ in $G$, which can be done in polynomial time by Theorem~\ref{CWP-hyp}.
This solves the case where $g$ has finite order in $G$.

Now assume that $g$ has infinite order in $G$. Then \eqref{eq-w^x} has at most one solution.
By considering also the equation $(w^{-1})^z = \val(\G)$,  it suffices to search for a solution $z \in \N$.
We can also assume that $w$ is shortlex-reduced. Using techniques from \cite{EpsteinH06} one can
further ensure that for every $n \in \N$, $w^n$ is $(\lambda,\epsilon)$-quasigeodesic for fixed constants $\lambda$ and $\epsilon$
that only depend on the group $G$: 

\paragraph{Reduction to the case with $w^n$ $(\lambda,\epsilon)$-quasigeodesic for all $n$:} 
Let us fix the two constants $L$ and $K$ from \eqref{constants-L-K} and define further constants:
\begin{equation} \label{def-constants}
N = |\mathcal{B}_{2\delta}(1)|, \quad  \lambda = N (2L+1), \quad \text{and} \quad \epsilon = 2N^2 (2L+1)^2  + 2N (2L+1).
\end{equation}
We factorize $w$ uniquely as $w = u v$ where $|u| \le |v| \le |u|+1$, and let
$\tilde{w} = \shlex(vu)$. Note that $|\tilde{w}| \le |w|$.
Let $\tilde g$ be the group element represented by
$\tilde{w}$. Since $\tilde g$ is conjugated
to $g$, also $\tilde g$ has infinite order.  By Lemma~\ref{lemma-cyclic-words-quasi-geo},
for every $n \geq 0$, the word $\tilde{w}^n$ is $(\lambda',\epsilon')$-quasigeodesic
for  $\lambda' = N |\tilde{w}|$, $\epsilon' = 2N^2 |\tilde{w}|^2  + 2N |\tilde{w}|$.
If $|\tilde{w}| < 2L+1$ then $\tilde{w}^n$ is 
$(\lambda, \epsilon)$-quasigeodesic for the constants $\lambda$ and $\epsilon$ from \eqref{def-constants}.
We then replace the equation $w^z = \val(\G)$ in \eqref{eq-w^x} by the equivalent equation $u \tilde{w}^{z} u^{-1} = \val(\G)$
(or $\tilde{w}^{z}  = u^{-1} \val(\G) u$).
To see the equivalence of these two equations, 
note that for every $n \geq 0$, $u \tilde{w}^n u^{-1} = u (v u)^n u^{-1} = 
(u v)^n = w^n$ in $G$.

Now assume that $|\tilde{w}| \ge 2L+1$.
By Lemma~\ref{lemma-Ep-Ho1}, $\tilde{w}^n$ is $L$-local $(1,2\delta)$-quasi\-geo\-desic for every $n \geq 0$.
By Lemma~\ref{lemma-Ep-Ho2}, one can compute in polynomial time
$c \in \mathcal{B}_{4\delta}(1)$ and an integer $1 \leq m \leq K$ such that $(\shlex(c^{-1} \tilde{w}^{m} c))^n$
is geodesic (and hence $(1,0)$-quasigeodesic) for all $n \geq 0$. 
We then produce for every number $0 \leq d \leq m-1$ a new equation 
$u \tilde{w}^{d} c (\shlex(c^{-1} \tilde{w}^{m} c))^{z} c^{-1} u^{-1} = \val(\G)$,
or, equivalently, $(\shlex(c^{-1} \tilde{w}^{m} c))^{z} = c^{-1} \tilde{w}^{-d} u^{-1}\val(\G)uc$.
If we denote this equation with $\mathcal{E}_d$ then we see that 
\begin{itemize}
\item if $w^n = \val(\G)$ in $G$ for $n \in \N$ then $\lfloor n/m \rfloor$ is a solution of $\mathcal{E}_d$, where $d = n \bmod m$, and
\item if $n$ is a solution of $\mathcal{E}_d$ for some $0 \leq d \leq m-1$, then $w^{n \cdot m + d} = \val(\G)$ in $G$.
\end{itemize}
Hence, it suffices to check for each of the constantly many equations $\mathcal{E}_d$ ($0 \leq d \leq m-1$) whether it has a solution and to compute the solution if it exists.

The above consideration shows that we can restrict to the case of an equation $w^z = \val(\G)$,
where $w$ represents a group element of infinite order and
for every $n \in \N$, $w^n$ is $(\lambda,\epsilon)$-quasigeodesic for fixed constants $\lambda$ and $\epsilon$.

\medskip
\noindent
Finally, by Theorem~\ref{main-HLS} we can also assume that the word $\val(\G)$ (and hence every word
$\val(X)$ for $X$ a variable of $\G$) is shortlex-reduced. Hence, if $w^z = \val(\G)$ for some $z \in \N$, then
by Lemma~\ref{lemma-asynch-fellow-travel} we have  $P[w^z] \approx_\kappa P[\val(\G)]$ for a fixed constant $\kappa$ that only depends
on $G$. We proceed in two steps.

\medskip
\noindent
{\em Step 1.}
We compute in polynomial time for all variables $X \in V$ of the SLP $\G$, 
all group elements $a,b \in \mathcal{B}_\kappa(1)$ (there are only constantly many), and
all factors $w'$ of $w$ a bit $\beta[X,a,b,w'] \in \{0,1\}$ which is set to $1$ if and only if (i) $\val(X) = a w' b$ in $G$
and (ii) $P[\val(X)] \approx_\kappa a \cdot P[w']$. 

We compute these bits $\beta[X,a,b,w']$ in a bottom-up process where we begin with variables $X$ such that $\rho(X)$ is a terminal
symbol and end with the start variable $S$. So, let us start with a variable $X$ such that $\rho(X) = c \in \Sigma$ and let
$a,b,w'$ as above. Then we have to check whether $c = aw'b$ in $G$ and 
$P[c] \approx_\kappa a \cdot P[w']$. The former can be checked in linear time (it is an instance of the word problem) and
the latter can be done in polynomial time as well: we have to check whether the 
path $a \cdot P[w']$ splits into two parts, where all vertices in the first (resp., second) part belong to  $\mathcal{B}_\kappa(1)$
(resp., $\mathcal{B}_\kappa(c)$).

Let us now consider a variable $X$ with $\rho(X) = YZ$ such that all bits $\beta[Y,a,b,w']$ and $\beta[Z,a,b,w']$ have been computed.
Let us fix $a,b \in \mathcal{B}_\kappa(1)$ and a factor $w'$ of $w$. We have $\beta[X,a,b,w']=1$ if and only if there exists a factorization
$w' = w'_1 w'_2$ and $c \in \mathcal{B}_\kappa(1)$ such that $\beta[Y,a,c,w'_1]=1$ and $\beta[Z,c^{-1},b,w'_2]=1$. This allows us to compute
$\beta[X,a,b,w']$ in polynomial time.

\medskip
\noindent
{\em Step 2.}
We compute in polynomial time for all variables $X \in V$,  all group elements $a,b \in \mathcal{B}_\kappa(1)$, 
all proper suffixes $w_2$ of $w$, and all proper prefixes $w_1$ of $w$  the unique number $z = z[X,a,b,w_2,w_1] \in \N$ (if it exists) such that (i) $\val(X) = a w_2 w^z w_1 b$ in $G$
and (ii) $P[\val(X)] \approx_\kappa a \cdot P[w_2 w^z w_1]$.
If such an integer $z$ does not exist we set $z[X,a,b,w_2,w_1] = \infty$. Note that the integers $z[X,a,b,w_2,w_1]$ are unique since
$w$ represents a group element of infinite order. We represent $z[X,a,b,w_2,w_1]$ in binary encoding.
As in step 1, the computation of the numbers $z[X,a,b,w_2,w_1]$ begins with variables $X$ 
such that $\rho(X)$ is a terminal symbol and ends with the start variable $S$.
The bits $\beta[X,a,b,w']$ from step 1 are needed in the computation. 
 
Let us start with a variable $X$ such that $\rho(X) = c \in \Sigma$ and let
$a,b,w_2, w_1$ as above. We have to consider the equation $c = a w_2 w^z w_1 b$, or, equivalently, $w^z = u$ 
where $u = \shlex(w_2^{-1} a^{-1} c b^{-1} w_1^{-1})$.
We can compute the word $u$ in linear time. Since $w^n$ is  $(\lambda,\epsilon)$-quasigeodesic for all $n \in \N$, every $n \in \N$ with $w^n = u$ in $G$
has to satisfy $n \cdot |w| \leq \lambda \cdot |u| + \epsilon$, i.e., $n \leq |w|^{-1} ( \lambda \cdot |u| + \epsilon )$.
Hence, we can check for all $0 \le n \leq |w|^{-1} ( \lambda \cdot |u| + \epsilon )$  whether $w^n = u$ in $G$. If we do not find a solution, we set $z[X,a,b,w_1,w_2] = \infty$. If we find a (unique) solution
$n$, we can check in polynomial time whether $P[\val(X)]  = P[c] \approx_\kappa a \cdot P[w_2 w^n w_1]$ as above for 
$P[\val(X)] \approx_\kappa a \cdot P[w']$ in step 1.

\begin{figure}[t]
  \centering{
    \scalebox{1}{
      \begin{tikzpicture}
        \tikzstyle{small} = [circle,draw=black,fill=black,inner sep=.15mm]
        \tikzstyle{zero} = [circle,inner sep=0mm]

        \node[small] (1) {} ;
        \node[small,  right = 4cm of 1] (a) {} ;
        \node[small,  above = 0.9cm of a] (b) {} ;
        \node[small,  right = 8cm of 1] (2) {};
        \node[small,  above = 2cm of 1] (3) {};
        \node[small,  right = 8cm of 3] (4) {};
    
        \node[zero,  below = .6mm of 1] (1') {};
        \node[zero,  below = .6mm of 2] (2') {};
        \node[zero,  above = .6mm of 3] (3') {};
        \node[zero,  above = .6mm of 4] (4') {};
      
        \draw [->] (3) to [out=-27, in=-180] node[pos = 0.5, below = -0.7mm] {$u$} (b);
        \draw [->] (b) to [out=0, in=-153] node[pos = 0.5, below = -0.7mm]  {$v$} (4);
        \draw [->] (3') to [out=-27, in=-153] node[pos = 0.5, above = -0.7mm]{$w_2 w^z w_1$} (4'); 
      
        \draw [->] (1) edge node[left=-.7mm]{$a$} (3);
        \draw [<-] (2) edge node[right=-.7mm]{$b$} (4);
        \draw [->] (1') edge node[pos = 0.5, below=-.7mm]{$\val(X)$} (2');
        \draw [->] (1) edge node[above=-.7mm]{$\val(Y)$} (a);
        \draw [->] (a) edge node[above=-.7mm]{$\val(Z)$} (2);
        \draw [<-] (a) edge node[left=-.7mm]{$c$} (b);

  \end{tikzpicture}}}
  \caption{Situation in the proof of Lemma~\ref{thm-CPP-hyp}.}
  \label{fig-quad}
\end{figure}
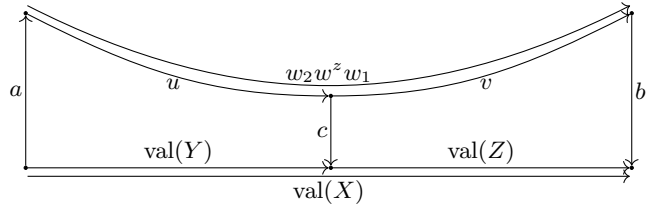

Now, let $X$ be a variable with $\rho(X) = YZ$ such that all values $z[Y,a,b,w_2,w_1]$ and $z[Z,a,b,w_2,w_1]$ have been computed.
Let us fix $a,b \in \mathcal{B}_\kappa(1)$, a proper suffix $w_2$ of $w$ and a proper prefix $w_1$ of $w$.
Note that if $\val(X) = a w_2 w^z w_1 b$ in $G$
and $P[\val(X)] \approx_\kappa a \cdot P[w_2 w^z w_1]$ for some $z \in \N$, then there 
must exist $c \in \mathcal{B}_\kappa(1)$ and a factorization  $w_2 w^z w_1 = uv$  such that
\begin{itemize}
\item $\val(Y) = a u c$ and $\val(Z) = c^{-1} v b$ in $G$,
\item $P[\val(Y)] \approx_\kappa a \cdot P[u]$, and 
\item $P[\val(Z)] \approx_\kappa c^{-1} \cdot P[v]$; see Figure~\ref{fig-quad}.
\end{itemize}
 For the factorization $w_2 w^z w_1 = u v$, one of the following cases has to hold:
\begin{itemize}
\item There is a factorization $w_2 = u w'_2$ such that $v = w'_2 w^{z} w_1$. We then have 
$\beta[Y, a,c,u]=1$ and $z = z[Z,c^{-1},b,w'_2,w_1]$. Vice versa, if $\beta[Y, a,c,u]=1$, $z[Z,c^{-1},b,w'_2,w_1]<\infty$ and
$w_2 = u w'_2$
then $$z[X,a,b,w_2,w_1] = z[Z,c^{-1},b,w'_2,w_1].$$
\item There is a factorization $w_1 = w'_1 v$ such that $u = w_2 w^z w'_1$. We then have
$z = z[Y,a,c,w_2,w'_1]$ and $\beta[Z, c^{-1},b,v]=1$.
Vice versa, if $z[Y,a,c,w_2,w'_1]<\infty$, $\beta[Z, c^{-1},b,v]=1$ and $w_1 = w'_1 v$ then
$$z[X,a,b,w_2,w_1] = z[Y,a,c,w_2,w'_1].
$$
\item There are $z_1, z_2 \in \N$ such that 
$u = w_2 w^{z_1}$, $v = w^{z_2} w_1$, and $z = z_1+z_2$. We then have 
$z = z[Y,a,c,w_2,\varepsilon]+z[Z,c^{-1},b,\varepsilon,w_1]$. Vice versa, if 
$z[Y,a,c,w_2,\varepsilon] <  \infty$, $z[Z,c^{-1},b,\varepsilon,w_1] < \infty$ then
$$z[X,a,b,w_2,w_1] = z[Y,a,c,w_2,\varepsilon]+z[Z,c^{-1},b,\varepsilon,w_1].
$$
\item There are $z_1, z_2 \in \N$ and a proper factorization $w = w'_1 w'_2$ such that  $u = w_2 w^{z_1} w'_1$,
$v = w'_2 w^{z_2} w_1$, and $z = z_1+z_2 + 1$. 
We then have $z = z[Y,a,c,w_2,w'_1]+z[Z,c^{-1},b,w'_2,w_1]+1$.
Vice versa, if 
$z[Y,a,c,w_2,w'_1] <  \infty$, $z[Z,c^{-1},b,w'_2,w_1] < \infty$ and $w = w'_1 w'_2$ then
$$z[X,a,b,w_2,w_1] = z[Y,a,c,w_2,w'_1]+z[Z,c^{-1},b,w'_2,w_1]+1.
$$
\end{itemize}
From these observations it is straightforward to compute in polynomial time all values $z[X,a,b,w_2,w_1]$ from the 
values $z[Y,a,c,w_2,w'_1]$ and $z[Z,c^{-1},b,w'_2,w_1]$ where $c \in \mathcal{B}_\kappa(1)$, $w'_1$ is a proper prefix
of $w$ and $w'_2$ is a proper suffix of $w_2$.

Finally, $z[S,1,1,\varepsilon,\varepsilon]$ is the unique solution 
of equation \eqref{eq-w^x} if $z[S,1,1,\varepsilon,\varepsilon] < \infty$.
This completes the proof of the theorem.
\qed
 \end{proof}

\section{HNN-extensions with cyclic associated subgroups}
\label{sec-HNN-cyclic}

Let $H$ be a f.g.~group and fix a generating set $\Sigma$ for $H$.
We say that a cyclic subgroup $\langle g \rangle \leq H$ is {\em undistorted} in $H$ if there exists a constant $\delta$
such that for every $h \in \langle g \rangle$ there exists $z \in \Z$ with $h = g^z$ and $|z| \leq \delta \cdot |h|_\Sigma$
(this definition does not depend on the choice of $\Sigma$).\footnote{The concept of undistorted subgroups is defined for arbitrary finitely generated subgroups but we will need it only for  the cyclic case.} 
This is clearly the case if $\langle g \rangle$ is finite. 

Note that if $g, h \in H$ are elements of the same order then the group 
$\langle H, t \mid t^{-1} g t = h \rangle$ is the HNN-extension $\langle H, t \mid t^{-1} a t = \varphi(a)\; (a \in \langle g \rangle) \rangle$,
where $\varphi \colon \langle g \rangle \to  \langle h \rangle$ is the isomorphism with $\varphi(g^z) = h^z$ for all $z \in \Z$.
In the following theorem we consider a slight extension of the word problem for such an HNN-extension
$G=\langle H, t \mid t^{-1} g t = h \rangle$ which we call the {\em semi-compressed word problem} for $G$.
In this problem the input is a sequence 
$\mathcal{G}_0 t^{\epsilon_1} \mathcal{G}_1 t^{\epsilon_2} \mathcal{G}_2 \cdots t^{\epsilon_n} \mathcal{G}_n$ 
where every $\mathcal{G}_i$ ($0 \leq i \leq n$) is an SLP (or a composition system) over the alphabet $\Sigma$
and $\epsilon_i \in \{-1,1\}$ for $1 \leq i \leq n$. The question is whether 
$\val(\mathcal{G}_0) t^{\epsilon_1} \val(\mathcal{G}_1) t^{\epsilon_2} \val(\mathcal{G}_2) \cdots t^{\epsilon_n} \val(\mathcal{G}_n)=1$ in 
$G$. 

\begin{theorem} \label{thm-general-WP-HNN}
Let $H$ be a fixed f.g.~group and let $g,h\in H$ be elements with the same order in $H$ (so that
the cyclic subgroups $\langle g \rangle$ and $\langle h \rangle$ are isomorphic)
such that $\langle g \rangle$ and $\langle h \rangle$ are 
undistorted. Then the semi-compressed word problem for the HNN-extension $\langle H, t \mid t^{-1} g t = h \rangle$
is polynomial-time Turing-reducible to the compressed power problem for $H$.
\end{theorem}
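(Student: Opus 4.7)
My plan is to simulate Britton reduction in the compressed setting, using the compressed power problem for $H$ as an oracle.  By Theorem~\ref{thm-hagenah} I may assume the input has the form $w = \mathcal{G}_0 t^{\epsilon_1} \mathcal{G}_1 \cdots t^{\epsilon_n} \mathcal{G}_n$ where each $\mathcal{G}_i$ is an SLP over $\Sigma$.  I maintain such a sequence throughout the algorithm and repeatedly search for a pin, namely a pair of adjacent stable letters $t^{\epsilon_i}, t^{\epsilon_{i+1}}$ of opposite sign with $\val(\mathcal{G}_i)$ lying in $\langle g \rangle$ (if $\epsilon_i = -1$) or $\langle h \rangle$ (if $\epsilon_i = 1$).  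Since $g$ and $h$ are fixed elements of $H$, each such test, together with extraction of the witness exponent $z \in \Z$ satisfying $g^z = \val(\mathcal{G}_i)$ (resp. $h^z = \val(\mathcal{G}_i)$), is exactly a single call to the compressed power problem oracle.

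Once a pin is located, I perform the Britton rewriting step in compressed form: $t^{-1}\mathcal{G}_i t$ is replaced by an SLP $\mathcal{H}_z$ evaluating to $h^z$, and $t\mathcal{G}_i t^{-1}$ by one evaluating to $g^z$.  From the binary encoding of $z$ returned by the oracle, such an SLP of size $O(\log|z|)$ is built in polynomial time by repeated squaring of a fixed SLP for $h$ (resp.\ $g$).  I splice $\mathcal{H}_z$ in place of the three removed pieces and merge the now-adjacent SLPs $\mathcal{G}_{i-1}, \mathcal{H}_z, \mathcal{G}_{i+1}$ into a single SLP using $O(1)$ fresh concatenation variables.  Each round strictly decreases the number of stable letters by two, so the procedure terminates after at most $n/2$ rounds.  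By Britton's lemma, $w$ equals $1$ in $G$ iff, at termination, no stable letter remains and the unique residual SLP evaluates to $1$ in $H$; this final test is a compressed word problem for $H$, which is trivially encoded as a single compressed power problem query (e.g.\ query with a non-trivial fixed element and inspect whether the returned exponent is $0$, treating the finite-order case separately).

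The crux of the argument, and the only place where undistortedness of $\langle g \rangle$ and $\langle h \rangle$ is used, is a polynomial size bound on every SLP appearing during the simulation.  Let $N_0$ denote the input size and $\delta$ the common undistortion constant.  Undistortedness gives $|z_k| \le \delta \cdot |\val(\mathcal{G}_{i_k})|_\Sigma$ at the $k$-th reduction, and $|\val(\mathcal{G}_{i_k})|_\Sigma$ is at most the length of $\val(\mathcal{G}_{i_k})$ as a string over $\Sigma$.  Writing $\tilde h$ for a fixed word representing $h$, the string $\val(\mathcal{H}_{z_k})$ therefore has length at most $|\tilde h|\cdot\delta$ times the string length of $\val(\mathcal{G}_{i_k})$.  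Since the merge step merely adds string lengths, induction on the step number $k$ yields an upper bound of the form $C^k \cdot 2^{N_0}$ on the string length of every piece currently in the sequence, where $C$ is a constant depending only on $H, g, h$.  Using $k \le n \le N_0$ and taking logarithms gives $\log|z_k| = O(N_0)$, so each new SLP $\mathcal{H}_{z_k}$ has size $O(N_0)$ and the total SLP size in the sequence never exceeds $O(N_0^2)$.  Hence every oracle query is issued on a polynomial-size instance, only polynomially many queries are made, and the whole procedure is a polynomial-time Turing reduction to the compressed power problem for $H$.
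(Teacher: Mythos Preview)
Your argument is correct and follows the same scheme as the paper's proof: Britton reduction on the compressed sequence, using the compressed power oracle to detect each pin and extract the exponent, with undistortion controlling size growth. The one place you diverge is in how you manufacture a compressed representation of $h^z$ (resp.\ $g^z$): you build it by repeated squaring from the binary expansion of $z$, and then bound $\log|z_k|$ by tracking string lengths inductively, arriving at a total SLP size of $O(N_0^2)$. The paper instead recycles the composition system $\mathcal{H}_i$ that produced $u_i$: it replaces every terminal symbol in $\mathcal{H}_i$ by a fresh variable $A_h$ with right-hand side $h^{\pm 1}$, concatenates $\delta$ copies of the start variable (so the result derives $h^{\pm\delta|u_i|}$), and then uses a cut rule to extract the prefix of length $|\ell|\cdot|h|$. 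This yields a composition system of size $s_i + O(1)$, so each Britton step adds only $O(1)$ to the total size and the final bound is $s(W)+O(n)$. Both devices work; yours is the more obvious construction, theirs gives the tighter bound and explains why composition systems (rather than plain SLPs) are introduced.

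One small gap: when $g$ has finite order the exponent $z$ with $g^z=\val(\mathcal{G}_i)$ is not unique, so the oracle need not return one satisfying $|z|\le\delta\,|\val(\mathcal{G}_i)|_\Sigma$; you should reduce the returned $z$ modulo the (constant) order of $g$ before building $\mathcal{H}_z$. The paper sidesteps this by dispatching the finite-order case up front via \cite{HauLo11}.
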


\begin{proof}
The case where $\langle g \rangle$ and $\langle h \rangle$ are both finite is easy. 
In this case, by the main result of \cite{HauLo11}, even the 
 compressed word problem for $\langle H, t \mid t^{-1} g t = h \rangle$ is polynomial 
 time Turing-reducible to the compressed word problem for $H$, which is a special case of the compressed power problem. 

Let us now assume that $\langle g \rangle$ and $\langle h \rangle$ are infinite.
Fix a symmetric finite generating set $\Sigma$ for $H$.
Let $W = \mathcal{G}_0 t^{\epsilon_1} \mathcal{G}_1 t^{\epsilon_2} \mathcal{G}_2 \cdots t^{\epsilon_n} \mathcal{G}_n$  be
an input for the semi-compressed word problem for $\langle H, t \mid t^{-1} g t = h \rangle$, 
where $\G_i$ is a composition system over $\Sigma$ for $0 \leq i \leq n$ and $\epsilon_i \in \{-1,1\}$ for $1 \leq i \leq n$.
Basically, we do Britton reduction in any order on the word 
$\val(\mathcal{G}_0) t^{\epsilon_1} \val(\mathcal{G}_1) t^{\epsilon_2} \val(\mathcal{G}_2) \cdots t^{\epsilon_n} \val(\mathcal{G}_n)$. 
The number of Britton reduction steps is bounded by $n/2$.
After the $i$-th step we have a sequence $U = \mathcal{H}_0 t^{\zeta_1} \mathcal{H}_1 t^{\zeta_2} \mathcal{H}_2 \cdots t^{\zeta_m} \mathcal{H}_m$
where $m \leq n$, $\mathcal{H}_i = (V_i,S_i,\rho_i)$ is a composition system over $\Sigma$, and  $\zeta_i \in \{-1,1\}$.
Let $u_i = \val(\mathcal{H}_i)$,  $s_i = |\mathcal{H}_i|$  and define $s(U) = m + \sum_{i=0}^m s_i$,
which is a measure for the encoding length of $U$.
We then search for an $1 \leq i \leq m-1$ such that one of the following two cases holds:
\begin{enumerate}[(i)]
\item $\zeta_i = -1$, $\zeta_{i+1} = 1$ and there is an $\ell \in \Z$ such that $u_i = g^\ell$ in $H$.
\item $\zeta_i = 1$, $\zeta_{i+1} = -1$ and there is an $\ell \in \Z$ such that $u_i = h^\ell$ in $H$.
\end{enumerate}
Using oracle access to the compressed power problem for $H$ we can check in polynomial time whether
one of these cases holds and compute the corresponding integer $\ell$.
We then replace the subsequence $\mathcal{H}_{i-1} t^{\zeta_i} \mathcal{H}_i t^{\zeta_{i+1}} \mathcal{H}_{i+1}$ by 
a composition system $\mathcal{H}'_i$ where $\val(\mathcal{H}'_i)$ is 
$u_{i-1} h^\ell u_{i+1}$ in case (i) and $u_{i-1} g^\ell u_{i+1}$ in case (ii).
Let $U'$ be the resulting sequence.
It remains to bound $s(U')$. For this we have to bound the size of the composition system $\mathcal{H}'_i$.
Assume that $\zeta_i = -1$, $\zeta_{i+1} = 1$, and $u_i = g^\ell$ in $H$
(the case where $\zeta_i = 1$, $\zeta_{i+1} = -1$ and $u_i = h^\ell$ in $H$ is analogous). 
It suffices to show that $h^\ell$ can be produced by a composition system $\mathcal{H}''_i$ of size
$s_i + O(1)$. Then we can easily bound the size of $\mathcal{H}'_i$ by $s_{i-1} + s_i + s_{i+1} + O(1)$,
which yields $s(U') \leq s(U)+O(1)$.  This shows that 
 every sequence $V$ that occurs during the Britton reduction satisfies
$S(V) \leq S(W) + O(n)$ (recall that $W$ is the initial sequence and that the number of Britton reductions
is bounded by $n/2$).

Fix the constant $\delta$ such that for every $g' \in \langle g \rangle$ the unique (since $g$ has infinite order)
$z \in \mathbb{Z}$ with $g' = g^z$ satisfies $|z| \leq \delta \cdot |g'|_\Sigma$.
Hence, we have $|\ell| \leq \delta \cdot |u_i|$. W.l.o.g. we can assume that $\delta \in \mathbb{N}$.
The variables of $\mathcal{H}''_i$ are the variables of $\mathcal{H}_i$ plus two new variables $A_h$ and $S'_i$.
Define a morphism $\eta$ by $\eta(a) = A_h$ for all $a \in \Sigma$ and $\eta(A)=A$ for every variable $A$ of $\mathcal{H}_i$.
We define the right-hand side mapping $\rho''_i$ of $\mathcal{H}''_i$ by:
$\rho''_i(A_h) = h$ if $\ell \geq 0$ and $\rho''_i(A_h) = h^{-1}$ if $\ell < 0$, 
$\rho''_i(S'_i) = (S_i^\delta)[1: |\ell| \cdot |h|]$ and $\rho''_i(A) = \eta(\rho_i(A))$ 
for all variables $A$ of  $\mathcal{H}_i$. 
Note that $S_i^\delta$ derives to $h^{\delta \cdot |u_i|}$ if $\ell \geq 0$ and to 
 $h^{-\delta \cdot |u_i|}$ if $\ell < 0$. Since $|\ell| \leq \delta \cdot |u_i|$, 
$(S_i^\delta)[1: |\ell| \cdot |h|]$ derives to $h^{\ell}$.
The start variable of $\mathcal{H}''_i$ is $S'_i$.
The size of $\mathcal{H}''_i$ is $s_i  + |h| + \delta = s_i + O(1)$, since $|h|$ and $\delta$ are constants.
\qed
\end{proof}
A subgroup of a hyperbolic group is 
undistorted if and only if it is quasiconvex \cite[Lemma~1.6]{Min04}. That cyclic subgroups in hyperbolic groups are quasiconvex was shown
by Gromov \cite[Corollary~8.1.D]{Gro87}. Hence, infinite cyclic subgroups of a hyperbolic group are undistorted. 
Together with Theorems~\ref{thm-CPP-hyp} and \ref{thm-general-WP-HNN} we get:

\begin{corollary} \label{coro-hyp}
Let $H$ be a hyperbolic group and let $g,h \in H$ have the same order.
Then the word problem for $\langle H, t \mid t^{-1} g t = h \rangle$ can be solved in polynomial time.
\end{corollary}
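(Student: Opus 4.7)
The plan is to simply chain together the three ingredients already assembled in the paper: Theorem~\ref{thm-general-WP-HNN}, Theorem~\ref{thm-CPP-hyp}, and the classical fact that cyclic subgroups of hyperbolic groups are undistorted. The word problem for $G = \langle H, t \mid t^{-1} g t = h \rangle$ is an immediate special case of the semi-compressed word problem for $G$: given an input word $w \in (\Sigma \cup \{t, t^{-1}\})^*$, factor it as $w = w_0 t^{\epsilon_1} w_1 \cdots t^{\epsilon_n} w_n$ with $w_i \in \Sigma^*$ and let each $\mathcal{G}_i$ be the trivial SLP whose value is $w_i$. This can be done in logspace, and the resulting instance has size $\mathcal{O}(|w|)$.

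Next I would verify the undistortedness hypothesis of Theorem~\ref{thm-general-WP-HNN}. If $g$ and $h$ have finite order, then $\langle g \rangle$ and $\langle h \rangle$ are finite, hence trivially undistorted. If $g$ and $h$ have infinite order, then as stated right after the proof of Theorem~\ref{thm-general-WP-HNN}, Gromov \cite[Corollary~8.1.D]{Gro87} showed that every cyclic subgroup of a hyperbolic group is quasiconvex, and by Mineyev \cite[Lemma~1.6]{Min04} quasiconvex subgroups of hyperbolic groups are exactly the undistorted ones. Thus $\langle g \rangle$ and $\langle h \rangle$ are undistorted in $H$ in either case.

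Having verified the hypotheses, Theorem~\ref{thm-general-WP-HNN} yields a polynomial-time Turing reduction from the word problem for $G$ (via its semi-compressed version) to the compressed power problem for $H$. By Theorem~\ref{thm-CPP-hyp}, the latter is itself solvable in polynomial time when $H$ is hyperbolic. Composing a polynomial-time Turing reduction with a polynomial-time oracle algorithm gives a polynomial-time algorithm for the word problem of $G$, which is the claim.

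There is essentially no obstacle here since all the heavy lifting is done by Theorems~\ref{thm-general-WP-HNN} and~\ref{thm-CPP-hyp}; the only thing one must be careful about is explicitly citing the two separate facts (Gromov's quasiconvexity of cyclic subgroups and Mineyev's equivalence of quasiconvex and undistorted) needed to discharge the undistortedness hypothesis in the infinite-order case, and noting that the finite-order case is immediate.
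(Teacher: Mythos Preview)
Your proposal is correct and follows essentially the same route as the paper: verify that cyclic subgroups of hyperbolic groups are undistorted (finite case trivial, infinite case via Gromov's quasiconvexity result plus the equivalence with undistortedness), then combine Theorems~\ref{thm-general-WP-HNN} and~\ref{thm-CPP-hyp}. One small correction: the reference \cite{Min04} is Minasyan, not Mineyev.
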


\section{Generalization to graph of groups}\label{appendix-graph-groups}

We can slightly generalize Corollary~\ref{coro-hyp}. For this we need the definition of a graph of groups and its
fundamental group; a detailed introduction can be found in~\cite{Serre03}.

By a graph $\Gamma$, we mean a graph in the sense of Serre~\cite{Serre03}.  So
$\Gamma$ consists of a set $V$ of vertices, $E$ of edges, a function
$\alpha\colon E\to V$ selecting the initial vertex of an edge and a
fixed-point-free involution on $E$ written $e\mapsto e\inv$ (thus, $(e^{-1})^{-1} = e$ and $e \neq e^{-1}$ for all edges $e$).  This involution
extends to paths in the natural way.  One defines the terminal vertex function
$\omega\colon E\to V$ by $\omega(e)=\alpha(e\inv)$. 

A \textit{graph of groups} $(G,Y)$  consists of a graph $Y = (V,E)$ and
\begin{itemize}
\item[(i)] for each vertex $v \in V$, a group $G_v$,
\item[(ii)] for each edge $e \in E$, a group $G_e$ such that $G_e = G_{e\inv}$,
\item[(ii)] for each edge $e \in E$, monomorphisms $\alpha_e\colon G_e \to G_{\alpha(e)}$ and
$\omega_e \colon G_e \to G_{\omega(e)}$ such that $\alpha_e = \omega_{e\inv}$ for
all $e \in E$.
\end{itemize}
We assume that the groups $G_v$ intersect only in the identity, and
that they are disjoint from the edge set $E$. For each $v \in
V$, let $\langle \Sigma_v \mid R_v \rangle$ be a presentation
for $G_v$, with the different generating sets $\Sigma_v$ disjoint.  Let $\Delta$ be a 
set containing exactly one edge from each orbit of the involution $e\mapsto e\inv$ on $E$; we identify $E$ and $\Delta \cup \Delta^{-1}$ when convenient. Let $\Sigma$ be the (disjoint) union of all
the sets $\Sigma_v$ and $\Delta$.   We
define a group $F(G,Y)$ by the presentation
\begin{equation*}
F(G,Y)  =  \langle \Sigma \mid  R_v \; (v \in V), \;
e\omega_e(g)e\inv = \alpha_e(g) \; (e \in E, g \in G_e)  \rangle.
\end{equation*}
Fix a vertex $v_0 \in V(Y)$. A word in $w \in (\Sigma \cup \Sigma\inv)^*$ is of
\textit{cycle type at $v_0$} if it is of the form $w = w_0 e_1 w_1
e_2 w_2 \dots e_n w_n$ where:
\begin{itemize}
\item[(i)]  $e_i \in E$ for all $1\leq i\leq n$,
\item[(ii)] $e_1\cdots e_n$ is a path in $Y$ starting and ending at $v_0$,
\item[(iii)] $w_0\in (\Sigma_{v_0} \cup \Sigma_{v_0}\inv)^*$, and
\item[(iv)] for $1\leq i\leq n$, $w_i\in (\Sigma_{\omega(e_i)} \cup \Sigma_{\omega(e_i)}\inv)^*$.
\end{itemize}
The images in $F(G,Y)$ of the words of cycle type at $v_0$ form a
subgroup $\pi_1(G, Y, v_0)$ of $F(G,Y)$, called the
\textit{fundamental group of $(G,Y)$ at $v_0$}. The fundamental group of a connected graph of groups is (up to isomorphism)
independent of the choice of vertex $v_0$; hence we simply write $\pi_1(G, Y)$.
 It can be also defined via a spanning tree of $Y$, but we do not need this.
 An HNN-extension $\langle H,t \mid  t^{-1} a t = \varphi(a) \, (a \in A) \rangle$ can be obtained as
 the fundamental group of a graph of groups $(G,Y)$, where $Y$ consists of a single vertex $v$, a loop $e$ 
 and its inverse edge $e^{-1}$, and $G_v = H$, $G_e = A$. Similarly, amalgamated free products are special
 cases of fundamental groups.

The word problem for the fundamental group $\pi_1(G, Y)$ can be solved using a generalized form of Britton reduction \cite{DieWei13}.
This consists of applying the rewriting steps $e\omega_e(g)e\inv \to \alpha_e(g)$ for $e \in E$, $g \in G_e$
as long as possible. The proof of the following theorem is completely analogous to the proof of 
Theorem~\ref{thm-general-WP-HNN}.

\begin{theorem} \label{thm-general-WP-mHNN}
Let $(G,Y)$ be a graph of groups such that $Y = (V,E)$ is finite and 
every edge group $G_e$, $e \in E$, is cyclic
and the image $\alpha_e(G_e)$ is undistorted in $G_{\alpha(e)}$.\footnote{Then also $\omega_e(G_e)$ is undistorted in $G_{\omega(e)}$.}
Then the semi-compressed word problem for the fundamental group $\pi_1(G, Y)$ is polynomial time Turing-reducible
to the compressed power problems for the vertex groups $G_v$, $v \in V(Y)$.
\end{theorem}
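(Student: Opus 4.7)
The plan is to mimic the Britton-reduction strategy of Theorem~\ref{thm-general-WP-HNN}, replacing single HNN pins $t^{\pm 1}\mathcal{H}t^{\mp 1}$ by the generalized pins $e\mathcal{H}e\inv$ arising from the rewriting rules $e\omega_e(g)e\inv \to \alpha_e(g)$ of a graph of groups \cite{DieWei13}. I would parse a semi-compressed input as a sequence $W = \mathcal{G}_0 e_1 \mathcal{G}_1 \cdots e_n \mathcal{G}_n$ along a closed path at the base vertex, where each $\mathcal{G}_i$ is a composition system over the symmetric generating set $\Sigma_{v_i}$ of the appropriate vertex $v_i$. Then I would iteratively search for an index $i$ with $e_{i+1}=e_i\inv$ such that $\val(\mathcal{G}_i)$ represents an element of the cyclic subgroup $\omega_{e_i}(G_{e_i})\leq G_{\omega(e_i)}$; once no such pin remains, the resulting sequence represents $1$ in $\pi_1(G,Y)$ iff it contains no edge letters and its single remaining composition system evaluates to $1$ in the base vertex group, which is a single compressed-word-problem query (a special case of the compressed power problem).

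The key subroutine is exactly a compressed-power-problem oracle call: since $G_{e_i}$ is cyclic, say $G_{e_i}=\langle g_{e_i}\rangle$, so is $\omega_{e_i}(G_{e_i})=\langle \omega_{e_i}(g_{e_i})\rangle$, and deciding whether $\val(\mathcal{G}_i)=\omega_{e_i}(g_{e_i})^\ell$ in $G_{\omega(e_i)}$ together with computing the unique such $\ell$ is precisely one query to the compressed power problem for $G_{\omega(e_i)}$. If successful, I would replace $\mathcal{G}_{i-1} e_i \mathcal{G}_i e_i\inv \mathcal{G}_{i+1}$ by a single composition system over $\Sigma_{\alpha(e_i)}$ evaluating to $\val(\mathcal{G}_{i-1})\cdot \alpha_{e_i}(g_{e_i})^\ell \cdot\val(\mathcal{G}_{i+1})$; the three pieces are compatible because $\omega(e_{i-1})=\alpha(e_i)=\omega(e_i\inv)=\alpha(e_{i+1})$. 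The nontrivial part, exactly as in Theorem~\ref{thm-general-WP-HNN}, is building a composition system for $\alpha_{e_i}(g_{e_i})^\ell$ of size $|\mathcal{G}_i|+O(1)$: invoke the undistortion constant $\delta$ of $\omega_{e_i}(G_{e_i})$ (or equivalently of $\alpha_{e_i}(G_{e_i})$, by the footnote) to get $|\ell|\leq \delta\cdot |\val(\mathcal{G}_i)|$, then apply the letter-substitution-plus-prefix-cut trick: substitute each terminal in the right-hand sides of $\mathcal{G}_i$ by a fresh variable deriving the constant-length word for $\alpha_{e_i}(g_{e_i})^{\pm 1}$ (sign according to $\ell$), raise the start variable to the $\delta$-th power, and cut out the prefix of length $|\ell|\cdot|\alpha_{e_i}(g_{e_i})|$.

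The cost analysis and termination then go through unchanged: each generalized Britton reduction step removes two edge letters and inflates the total encoding length by only $O(1)$, so after at most $n/2$ steps the sequence is in normal form and its total size is polynomial in $|W|$; the overall procedure makes polynomially many calls to the compressed power problems of the various vertex groups. The finite-cyclic edge-group case is absorbed by the same uniform treatment: $|\ell|$ is then bounded by the constant $|G_e|$ and $\alpha_{e_i}(g_{e_i})^\ell$ admits a trivial constant-size composition system (alternatively one could invoke the HauLo11 reduction used in the finite case of Theorem~\ref{thm-general-WP-HNN}). The only real obstacle, controlling composition-system blow-up across iterated reductions, is already neutralized by the undistortion hypothesis via the cut-out construction above; the generalization from a single HNN-extension to a finite graph of groups is otherwise bookkeeping about which vertex each segment lives over and which of the finitely many edge groups is being reduced.
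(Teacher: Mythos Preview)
Your proposal is correct and takes essentially the same approach as the paper, which in fact gives no proof beyond the sentence ``The proof of the following theorem is completely analogous to the proof of Theorem~\ref{thm-general-WP-HNN}.'' You have correctly spelled out the analogy: generalized Britton reduction on pins $e_i \mathcal{G}_i e_i^{-1}$, the compressed-power-problem oracle call in $G_{\omega(e_i)}$ with respect to the generator $\omega_{e_i}(g_{e_i})$, and the size control via the letter-substitution-plus-prefix-cut construction transported to $\Sigma_{\alpha(e_i)}$ using the undistortion constant (which is uniform since $Y$ is finite).
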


\begin{corollary} \label{coro-hyp2}
Let $G$ be a fundamental group of a graph of groups such that all vertex groups are hyperbolic and
all edge groups are cyclic.  Then the word problem for $G$ can be solved in polynomial time.
\end{corollary}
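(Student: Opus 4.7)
The plan is to derive Corollary~\ref{coro-hyp2} as a direct composition of two reductions already in place. Let $(G,Y)$ be a graph of groups with $Y = (V,E)$ finite, every vertex group $G_v$ hyperbolic, and every edge group $G_e$ cyclic. The word problem for $\pi_1(G,Y)$ is a special case of the semi-compressed word problem for $\pi_1(G,Y)$: given an input word $w = w_0 e_1 w_1 \cdots e_n w_n$ of cycle type at a fixed base vertex $v_0$, one simply represents each factor $w_i$ by the trivial composition system $\mathcal{G}_i$ with $\val(\mathcal{G}_i) = w_i$, producing a semi-compressed instance of total size linear in $|w|$.

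Next I would verify the hypotheses of Theorem~\ref{thm-general-WP-mHNN}: for every edge $e \in E$, the image $\alpha_e(G_e)$ must be undistorted in the hyperbolic vertex group $G_{\alpha(e)}$. Since $G_e$ is cyclic, so is $\alpha_e(G_e)$. If this image is finite the undistortedness condition is trivial, and if it is infinite cyclic it follows from Gromov's result \cite[Corollary~8.1.D]{Gro87} that cyclic subgroups of a hyperbolic group are quasiconvex together with \cite[Lemma~1.6]{Min04}, which identifies quasiconvexity with undistortedness inside hyperbolic groups; these are precisely the facts invoked already in the paragraph preceding Corollary~\ref{coro-hyp}.

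With the hypotheses in hand, Theorem~\ref{thm-general-WP-mHNN} yields a polynomial-time Turing reduction from the semi-compressed word problem for $\pi_1(G,Y)$ to the compressed power problems for the vertex groups $G_v$ ($v \in V$). Each vertex group is hyperbolic, so by Theorem~\ref{thm-CPP-hyp} each of these compressed power problems is solvable in polynomial time. Composing the reductions gives the desired polynomial-time algorithm for the word problem of $G = \pi_1(G,Y)$.

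There is essentially no technical obstacle here beyond the checks above; the substantive work lies in Theorems~\ref{thm-general-WP-mHNN} and~\ref{thm-CPP-hyp}, and Corollary~\ref{coro-hyp2} is obtained by assembling these ingredients and noting that the ordinary word problem is swallowed by the semi-compressed version via trivial SLPs.
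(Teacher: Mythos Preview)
Your proposal is correct and follows exactly the route the paper intends: Corollary~\ref{coro-hyp2} is stated without proof as an immediate consequence of Theorem~\ref{thm-general-WP-mHNN} combined with Theorem~\ref{thm-CPP-hyp}, once one notes (as done just before Corollary~\ref{coro-hyp}) that cyclic subgroups of hyperbolic groups are undistorted. There is nothing to add or modify.
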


\section{Future work}

There is no hope to generalize Corollary~\ref{coro-hyp} to the case of finitely generated
associated subgroups (there exists a finitely generated subgroup $A$ of a hyperbolic group $G$ such that the membership problem for 
$A$ is undecidable \cite{Rip82b}).  On the other hand, it is known that the membership problem for quasiconvex subgroups of 
hyperbolic groups is decidable. What is the complexity of the word problem for an HNN-extension of a hyperbolic group $H$ with 
finitely generated quasiconvex associated subgroups? Even for the case where $H$ is free (where all subgroups are quasiconvex)
the existence of a polynomial time algorithm is not clear. 

The best known complexity bound for the word problem of a hyperbolic group is $\LogCFL$, which is contained in the 
circuit complexity class $\NC^2$. This leads to the question whether the complexity bound in  Corollary~\ref{coro-hyp}
can be improved to $\NC$. Also the complexity of the the compressed word problem for an HNN-extension of a hyperbolic 
group $H$ with cyclic associated subgroups is open (even in the case where the  base group $H$ is free).
Recall that the compressed word problem for a hyperbolic group can be solved in polynomial time \cite{HoltLS19}. 

\paragraph{\bf Acknowledgments.} This work is supported by the DFG project LO748/12-1.


\end{document}